\newtheorem{thm}{Theora}[section]
\newtheorem{Theo}[thm]{Theorem}
\newtheorem{Lem}[thm]{Lemma}
\newtheorem{Prop}[thm]{Proposition}
\newtheorem{Rem1}[thm]{Definition}
\newenvironment{proof1}{\smallskip\noindent{\it Proof}\rm}
{\hfill $\Box$\medskip}
\newcommand{\R}{\mathbb{R}}
\newcommand{\C}{\mathbb{C}}
\newcommand{\N}{\mathbb{N}}
\renewcommand\({\left(}
\renewcommand\){\right)}
\renewcommand\[{\left[}
\renewcommand\]{\right]}
\newcommand\Dim{{\rm{dim}}}
\newcommand\la{\lambda}
\newcommand\ga{\gamma}
\newcommand\si{\sigma}
\newcommand{\be}{\begin{equation}}
\newcommand{\ee}{\end{equation}}
\newcommand{\ba}{\begin{array}}
\newcommand{\ea}{\end{array}}
\newcommand{\bea}{\begin{eqnarray*}}
\newcommand{\eea}{\end{eqnarray*}}
\newcommand{\bean}{\begin{eqnarray}}
\newcommand{\eean}{\end{eqnarray}}
\newcommand\se{\sigma}
\begin{document}
\thispagestyle{empty}

\title[
Biharmonic Schrödinger equation with variable coefficients]{Exact boundary controllability of the linear Biharmonic Schrödinger equation with variable coefficients}

\author{Ka\"{\i}s Ammari}
\address{UR Analysis and Control of PDEs, UR13E564, Department of Mathematics, Faculty of Sciences of
Monastir, University of Monastir, 5019 Monastir, Tunisia}
\email{kais.ammari@fsm.rnu.tn}

\author{Hedi Bouzidi}
\address{UR Analysis and Control of PDEs, UR13E564, Department of Mathematics, Faculty of Sciences of
Monastir, University of Monastir, 5019 Monastir, Tunisia}
\email{hedi.bouzidi@fst.utm.tn}

\date{}

\begin{abstract}
In this paper, we study the  exact boundary controllability of the linear fourth-order Schr\"odinger equation, with variable physical parameters and clamped boundary conditions on a bounded interval. The control acts on the first spatial derivative at the left endpoint. We prove that  this control system is exactly controllable at any time
$T>0$. The proofs are based on a detailed spectral analysis and on the use of nonharmonic Fourier series.
\end{abstract}

\subjclass[2010]{93B05, 93B07, 93B12, 93B60}
\keywords{Biharmonic Schrödinger, boundary control, Fourier series, Riesz basis}

\maketitle

\tableofcontents
\vfill\break
\section{Introduction}
The fourth-order cubic nonlinear Schrödinger equation or biharmonic cubic non-linear Schrödinger equation reads is given by \be\label{bihar1} i\partial_t y+
\partial_{x}^4y- \partial_{x}^2y-\mu |y|^2y=0, \ee
where $y$ is a complex-valued function and $\mu$ is a real constant. This
equation has been modeled by Karpman \cite{Karpman} and Karpman and Shagalov \cite{Karpman1} in order to describe the propagation of intense laser beams in a bulk medium with Kerr nonlinearity when small fourth-order dispersion are taken into account. The fourth-order cubic nonlinear Schrödinger equation \eqref{bihar1} could be found in various areas of physics, such that  nonlinear optics, plasma physics, superconductivity and quantum mechanics, we refer to the book of Fibich \cite{Fibich}, see also \cite{Ben, Capistrano,Paus}. 

\medskip

The well-posedness and the
dynamic properties of the biharmonic Schrödinger equation \eqref{bihar1} have been extensively studied
from the mathematical perspective, see the paper by Pausader \cite{Paus1}, see the papers by Capistrano-Filho et al.
\cite{Capistrano2, Capistrano3}, also \cite{CuiGuo, Wen1} and references therein.

\medskip

In this work we are interested in studying the controllability properties of
the linear biharmonic Schrödinger equation \eqref{bihar1} for $\mu=0$, with variable physical parameters on the bounded interval $\(0,\ell\),~\ell>0$. More precisely, we consider the following control system 
\bean
\label{bihar2} \left\{
 \begin{array}{ll}
i\rho(x)\partial_ty=-\partial_x^2\(\sigma(x)\partial_x^2y\)+
\partial_x(q(x)\partial_xy)_x, \, (t,x)\in(0,T)\times(0,\ell),\\
y(t, 0) =\partial_xy(t,0) =y(t, \ell) = 0,~~\partial_xy(t, \ell) = f(t), \in t\in(0,T),\\
y(0, x) = y^{0}(x), \,  x\in(0,\ell),
\end{array}
 \right.
\eean
where $f$ is a control that acts at the left end $x = \ell$, and
the functions $y^0$ is the initial condition. Throughout the paper, we assume the following assumptions on  the coefficients: \be \rho,\sigma \in
H^{2}(0,\ell),~q\in H^{1}(0,\ell), \label{bihar3}\ee and there exist
constants $\rho_0,~\sigma_0>0$, such that \be
\rho(x)\geq\rho_{0},~~\se(x)\geq\se_{0},~~ q(x)\geq0,~~x \in
\[0, \ell\]. \label{bihar4}\ee
For system \eqref{bihar2}, the appropriate control notion to study is the
exact  controllability, which is defined as follows: system \eqref{bihar2} is said to be exact controllable
in time $T>0$ if, given any initial state $y^0$, there exists control $f$ such that the corresponding
solution $y = y(t,x)$  satisfies $y(T,.) = 0$.

\medskip

Let us now describe the existing results on  stabilization and control of the Biharmonic Schrödinger  system \eqref{bihar2}. When $\se\equiv0$, we recover the classical second order Schrödinger equation with variable coefficients occupying the interval $\(0,\ell\)$. In this context, the stabilization of the second order Schrödinger equation been thoroughly studied, see for instance \cite{Ammari2015,Ammari20170,Ammari202101,Ammari202102}. We also refer to \cite{Ammari20121bc,Ammari20121bcb,Ammari2019,Ammari20150} for related results on exact controllability of the second order Schrödinger equation, see also  \cite{L,Hansen}, and references therein. The first result on exact controllability of the linear biharmonic Schrödinger equation \eqref{bihar1} for $\mu=0$  on a bounded domain $\Omega\subset\R^n,~n\geq1,$ has been established by Zheng and Zhongcheng \cite{Zheng}. In that paper, the authors proved that the linearized system
\be\label{bihar5}\begin{cases}
 i\partial_t y+
\Delta^2y=0,&(t,x)\in(0,T)\times\Omega, \\
y=0,~\dfrac{\partial y}{\partial\nu}=f\chi_{\Gamma_0},&(t,x)\in(0,T)\times\partial\Omega\\
y(0, x) = y^{0},& x\in\Omega,
\end{cases}\ee
is exactly controllable for any positive
time $T,$  where the control $f\in L^2\(\(0,T\)\times\Gamma_0\)$ and ${\Gamma_0}\subset\partial\Omega$. Their proof uses the Hilbert Uniqueness Method "Lions'{\rm HUM}"
(cf. Lions \cite{J.L1, J.L2}) and the multiplier
techniques \cite{V.K}. Later, Wen et al.\cite{Wen1} proved the well-posedness and the exact controllability for the linear fourth order Schr\"{o}dinger system \eqref{bihar5} with the boundary observation
$$z(t,x)=-i\Delta\(\(\Delta^2\)^{-1}y(t,x)\),~(t,x)\in\(0,T\)\times\Gamma_0.$$
As consequence, they established the exponential stability of the closed-loop system under the output feedback $f=-kz$ for any $k>0$. The same authors in \cite{Wen2}, extended these results to the case of a linear
fourth-order multi-dimensional Schrödinger equation with hinged boundary by either moment or Dirichlet boundary control and collocated observation, respectively. 

\medskip

The inverse problem of retrieving a stationary potential
from boundary measurements for the one-dimensional linear system \eqref{bihar2} with $\rho\equiv\se\equiv1 \hbox{ and }  f\equiv0,$  was studied by Zheng \cite{Zheng1}. To this end, the author proved a global Carleman estimate for the corresponding fourth order operator. Exact controllability result has been established recently by Gao \cite{Peng} when the linear system \eqref{bihar2} with $\rho\equiv\se\equiv1 \hbox{ and } q \equiv0,$ has a particular structure. In that reference, the author consider a forward and backward stochastic fourth order Schr\"{o}dinger equation and, again, uses Carleman inequalities for the adjoint problem for proving the exact controllability result. More recently, the global stabilization and exact controllability properties have been studied by Capistrano-Filho et al. \cite{Capistrano} for the  biharmonic cubic non-linear Schrödinger equation \eqref{bihar1} on a periodic domain $\mathbb{T}$ with internal control supported on an arbitrary sub-domain of $\mathbb{T}$.  More precisely, by means of some properties of propagation of compactness and regularity in Bourgain spaces, first they showed that the system is globally exponentially stabilizable. Then they used this with a local controllability result to get the global controllability for the associated control system. In particular, for the proof of the local controllability result, they combined a perturbation argument and  the fixed point theorem of Picard.

\medskip

To our knowledge, the exact controllability of the
fourth order Schr\"{o}dinger equation with variable
coefficients is still unknown. In this paper we prove that the linear  control system \eqref{bihar2} is exactly controllable in any time $T>0$, where the control $f\in L^2(0,T)$ and the initial condition $y^{0}\in H^{-2}(0,\ell)$. Our approach is essentially based on the qualitative theory of fourth-order linear differential equations, and on a precise asymptotic analysis of the eigenvalue
and eigenfunction. Firstly, we prove that all the
eigenvalues $\({\lambda_{n}}\)_{n\in\N^*}$ associated to
the control system \eqref{bihar2} with $f(t)\equiv0$ are allegorically simple. Moreover, we show that the second derivative
of each eigenfunction $\phi_n,~n\in\N^*,$ associated with the uncontrolled
system does not vanish at the end $x=\ell$. Secondly,  by a
precise computation of the asymptotics of the eigenvalues $\({\lambda_{n}}\)_{n\in\N^*}$,  we establish that the spectral gap
$$|{\la_{n+1}}-{\la_n}|\asymp {n^3}\(\frac{\pi}{\gamma}\)^{4},~\hbox{ as } n\to\infty, \gamma := \int^\ell_0
\sqrt[4]{\frac{\rho(t)}{\sigma(t)}} dt.$$ 
As a result of the
theory of non-harmonic Fourier series and a variant of Ingham's inequality due to Beurling (e.g., \cite{L}), we derive the following observability inequality  \be
\int_{0}^{T}|\partial_x^2\tilde{y}(t,\ell)|^{2}dt \asymp
\|\tilde{y}^0\|_{H^2_0(0,\ell)}^{2},
\label{bihar6}\ee
for any $T>0$, where $\tilde y$ is the solution of system
\eqref{bihar2} without control. Finally, we apply the Lions'{\rm
HUM} to deduce the
exact controllability result for the system \eqref{bihar2}.

\medskip

The rest of the paper is divided as follow: In
Section $2$, we establish the well-posedness of system
\eqref{bihar2} without control. In Section $3$, we prove the
simplicity of all the eigenvalues $\(\la_n\)_{n\geq1}$ and we
determinate the asymptotics of the associated spectral gap. In
Section $4$, we prove the observability inequality \eqref{bihar6}. Finally in Section $5$, we prove the exact controllability result for the
linear control problem \eqref{bihar2}.
\section{Well-posedness of the uncontrolled system}
In this section, we will see how solutions of system
\eqref{bihar2} without control can be developed in terms of Fourier series.
As a consequence, we establish  the existence and the uniqueness of solutions of the uncontrolled system  \eqref{bihar2} with $f(t)\equiv0$. To this end, we consider the following system
\bean
\label{bihar7} \left\{
 \begin{array}{ll}
i\rho(x)\partial_ty=-\partial_x^2\(\sigma(x)\partial_x^2y\)+
\partial_x(q(x)\partial_xy)_x,&(t,x)\in(0,T)\times(0,\ell),\\
y(t, 0) =\partial_xy(t,0) =y(t, \ell) =\partial_xy(t, \ell) =0,&t\in(0,T),\\
y(0, x) = y^{0},& x\in(0,\ell),
\end{array}
 \right.
\eean
First of all, let us define by $L^2_\rho(0,\ell)$ the space of
functions $y$ such that $$\int_0^\ell |y(x)|^{2}\rho(x)dx<\infty.$$ Throughout this paper, we
denote by $H^k(0,\ell)$ the $L^2_\rho(0,\ell)-$based Sobolev spaces for
$k> 0$. We consider the following Sobolev space $$ H^{2}_0(0,\ell):=\left\{y\in H^{2}(0,\ell)~:~y(0)=y'(0)=y(\ell)=y'(\ell)=0\right\} $$ endowed with the norm $$
\|u\|_{H^{2}_0(0,\ell)}=\|u''\|_{L^2_\rho(0,\ell)}.$$
It is easy to show by Rellich's theorem (e.g., \cite{V.K}) that the space $H^{2}_0(0,\ell)$ is densely and compactly embedded in the space
$L^2_\rho(0,\ell)$. In the sequel, we introduce the operator
$\mathcal{A}$ defined in $L^2_\rho(0,\ell)$ by setting:
 $$ \mathcal{A}y = \rho^{-1}\(\(\sigma y''\)''-\(qy'\)'\), $$
 on the domain
 $$ \mathcal D\(\mathcal{A}\)= H^{4}(0,\ell) \cap H^2_0(0,\ell),$$
which is dense in $L^2_{\rho}(0,\ell)$.
\begin{Lem}\label{rr}
The linear operator $\mathcal{A}$ is positive and self-adjoint such
that $\mathcal{A}^{-1}$ is compact. Moreover, the spectrum of
$\mathcal{A}$ is discrete and consists of a
 sequence of positive eigenvalues
$(\lambda_{n})_{n\in\mathbb{N}^{*}}$ tending to $+\infty$:
$$0<\lambda_{1}\leq\lambda_{2}\leq.......\leq\lambda_{n}\leq.....
\underset{n\rightarrow +\infty}{\longrightarrow}+\infty.$$
The corresponding eigenfunctions $(\Phi_{n})_{n\in\N^*}$ can be chosen
to form an orthonormal basis in $L^2_\rho(0,\ell)$.
\end{Lem}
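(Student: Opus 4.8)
The plan is to realize $\mathcal{A}$ as the operator associated with a symmetric, coercive sesquilinear form and then invoke the classical spectral theory for such operators. Concretely, I would introduce on $V := H^2_0(0,\ell)$ the form
\[
a(y,z) = \int_0^\ell \sigma(x)\, y''(x)\,\overline{z''(x)}\,dx + \int_0^\ell q(x)\, y'(x)\,\overline{z'(x)}\,dx,
\]
and work with the Gelfand triple $V \hookrightarrow L^2_\rho(0,\ell) \hookrightarrow V'$, the first embedding being dense and compact by the Rellich argument already noted. The point of the clamped boundary conditions in $H^2_0(0,\ell)$ is precisely that two integrations by parts produce no boundary terms, so that $a(y,z) = \int_0^\ell (\mathcal{A}y)\,\overline{z}\,\rho\,dx$ for $y \in \mathcal D(\mathcal{A})$ and $z \in V$; this identity is what ties the form back to the differential operator.

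First I would check that $a$ is Hermitian, which is immediate since $\sigma, q$ are real-valued, and that it is continuous on $V \times V$ using $\sigma \in H^2 \subset L^\infty$ and $q \in H^1 \subset L^\infty$ together with the Sobolev control of $y'$ by $\|y\|_{H^2_0}$. The essential structural input is coercivity: by hypothesis \eqref{bihar4} one has $\sigma \geq \sigma_0 > 0$ and $q \geq 0$, whence
\[
a(y,y) \geq \sigma_0 \int_0^\ell |y''|^2\,dx \geq c\,\|y\|_{L^2_\rho}^2\,\|\text{(const)}\|
\]
—more precisely $a(y,y) \geq \sigma_0\,\rho_0^{-1}\,\|y\|_{H^2_0(0,\ell)}^2$ after accounting for the $\rho$-weight in the norm, so $a$ is equivalent to the $V$-inner product and in particular strictly positive for $y \neq 0$. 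By the Lax--Milgram theorem the map $y \mapsto a(y,\cdot)$ is an isomorphism $V \to V'$, and restricting to data in $L^2_\rho(0,\ell)$ defines $\mathcal{A}^{-1}$ as a bounded operator $L^2_\rho(0,\ell) \to V$. Composing with the compact embedding $V \hookrightarrow L^2_\rho(0,\ell)$ shows $\mathcal{A}^{-1}$ is compact on $L^2_\rho(0,\ell)$, and its self-adjointness follows from the symmetry of $a$; positivity of $\mathcal{A}$ is inherited from $a(y,y) > 0$. The standard Riesz--Schauder spectral theorem for compact self-adjoint operators then yields a sequence of positive eigenvalues of $\mathcal{A}$ accumulating only at $+\infty$, together with eigenfunctions forming an orthonormal basis of $L^2_\rho(0,\ell)$.

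The main obstacle, and the only step requiring genuine care, is verifying that the weak eigenfunctions delivered by the abstract theory actually lie in $\mathcal D(\mathcal{A}) = H^4(0,\ell) \cap H^2_0(0,\ell)$, i.e.\ an elliptic regularity bootstrap. Given $\mathcal{A}y = \lambda y$ weakly with $y \in H^2_0(0,\ell)$, the identity $(\sigma y'')'' = \lambda \rho y + (q y')'$ must be read in the distributional sense; since the right-hand side lies in $H^{-1}(0,\ell)$ and $q y' \in H^1$, one bootstraps to conclude $(\sigma y'')'' \in L^2$, hence $\sigma y'' \in H^2$, and using $\sigma \in H^2$ with $\sigma \geq \sigma_0 > 0$ (so that $1/\sigma \in H^2$ as well) recovers $y'' \in H^2$, i.e.\ $y \in H^4(0,\ell)$. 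The regularity of the variable coefficients assumed in \eqref{bihar3} is exactly what makes this division by $\sigma$ and the product estimates legitimate. Once this is in place, the eigenvalues of $\mathcal{A}$ are the reciprocals of those of $\mathcal{A}^{-1}$ and the lemma follows in full.
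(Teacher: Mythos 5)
Your proof is correct, and it takes a genuinely different route from the paper's. The paper argues directly on the operator domain $\mathcal D(\mathcal{A})=H^{4}(0,\ell)\cap H^{2}_0(0,\ell)$: two integrations by parts give $\langle\mathcal{A}y,y\rangle_{L^2_\rho(0,\ell)}=\int_0^\ell\bigl(\sigma|y''|^{2}+q|y'|^{2}\bigr)dx>0$ for $y\not\equiv0$, hence symmetry and positivity; self-adjointness is then asserted via the range criterion $Ran(\mathcal{A}-i\,Id)=L^2_\rho(0,\ell)$ (stated as ``easy to show'', with no computation), and compactness of $\mathcal{A}^{-1}$ follows from the compact embedding $H^{2}_0(0,\ell)\hookrightarrow L^2_\rho(0,\ell)$. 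You instead realize $\mathcal{A}$ through the coercive Hermitian form $a$ on $V=H^{2}_0(0,\ell)$ and Lax--Milgram. Your route buys several things the paper leaves implicit: it proves surjectivity of $\mathcal{A}$ (so $\mathcal{A}^{-1}$ is actually everywhere defined before one speaks of its compactness), it replaces the unverified range computation by the standard fact that a bounded symmetric operator ($\mathcal{A}^{-1}$, inherited from the symmetry of $a$) is self-adjoint, and your elliptic-regularity bootstrap is exactly the step needed to identify the abstract form-domain eigenfunctions with elements of $H^{4}(0,\ell)\cap H^{2}_0(0,\ell)$ --- a point the paper silently assumes. The paper's argument is shorter and suffices at its chosen level of rigor; yours is more self-contained. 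Two small repairs to yours: the coercivity constant should be $\sigma_0\|\rho\|_{L^\infty(0,\ell)}^{-1}$, not $\sigma_0\rho_0^{-1}$, since passing from $\sigma_0\int_0^\ell|y''|^{2}dx$ to the weighted norm $\|y\|^{2}_{H^{2}_0(0,\ell)}=\int_0^\ell|y''|^{2}\rho\,dx$ requires an \emph{upper} bound on $\rho$ (harmless, as $\rho\in H^{2}(0,\ell)\hookrightarrow C([0,\ell])$ is bounded above, and your displayed inequality there is garbled and should simply read $a(y,y)\geq\sigma_0\|\rho\|_{L^\infty(0,\ell)}^{-1}\|y\|^{2}_{H^{2}_0(0,\ell)}$); and in the bootstrap the right-hand side $\lambda\rho y+(qy')'$ already lies in $L^{2}(0,\ell)$, since $qy'$ is a product of $H^{1}$ functions in one dimension, so you start one rung higher than the $H^{-1}$ you state --- the conclusion $y\in H^{4}(0,\ell)$ is unaffected.
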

\begin{proof}
Let $y \in\mathcal D\(\mathcal{A}\)$, then by integration by parts, we
have \bea \langle \mathcal{A}y,
y\rangle_{L^2_\rho(0,\ell)}&=&\int_{0}^{\ell}\Big{(}(\sigma(x)y''(x))''-(q(x)y'(x))'\Big{)} \overline{y}(x) dx \nonumber\\
&=&\int_{0}^{\ell}\sigma(x)|y''(x)|^{2}dx+q(x)|y'(x)|^{2}dx.
\eea Since $\sigma>0$ and $q\geq0$, then $$\langle
\mathcal{A}y, y\rangle_{L^2_\rho(0,\ell)}>0 \hbox{ for }y\not\equiv0,$$ and hence the quadratic
form has a positive real values, which implies that the linear operator
$\mathcal{A}$ is symmetric. Furthermore, it is easy to show that
\\$Ran(\mathcal{A}-iId)=L^2_\rho(0,\ell)$, and this means that
$\mathcal{A}$ is selfadjoint. Since the space $H^{2}_{0}(0,\ell)$ is continuously and compactly embedded in the space
$L^2_\rho(0,\ell)$, then $\mathcal{A}^{-1}$ is compact in
$L^2_\rho(0,\ell)$. The lemma is proved.
\end{proof}
Now, we give a characterization of some fractional powers of the linear
operator $\mathcal{A}$ which will be useful to give a description of
the solutions of problem \eqref{bihar7} in terms of Fourier
series. According to Lemma \ref{rr}, the operator $\mathcal{A}$ is
positive and self-adjoint, and hence it generates a scale of
interpolation spaces $\mathcal{H}_{\theta}$, $\theta \in
\mathbb{R}$. For $\theta\geq0$, the space $\mathcal{H}_{\theta}$
coincides with $\mathcal D(\mathcal{A}^{\theta})$ and is equipped
with the norm $\|u\|_\theta^2=\langle \mathcal{A}^\theta u,
\mathcal{A}^\theta u\rangle_{L^2_\rho(0,\ell)}$, and for $\theta< 0$ it
is defined as the completion of $L^2_\rho(0,\ell)$ with respect to this
norm. Furthermore, we have the following spectral representation of
space $\mathcal{H}_{\theta}$,
\be \mathcal{H}_{\theta}=
\left\{u(x)=\sum\limits_{n\in\N^*}c_n\Phi_n(x)~:
 ~\|u\|_{\theta}^2=\sum\limits_{n\in\N^*}\la_n^{2\theta}|c_n|^{2}<\infty\right\},\label{bihar8}\ee
where $\theta\in \R$, and the eigenfunctions $\(\Phi_{n}\)_{n\in\N^*}$
are defined in Lemma \ref{rr}. In particular, $$\mathcal{H}_{0}=L^2_\rho(0,\ell) \hbox{ and } \mathcal{H}_{1/2}=H^{2}_{0}(0,\ell).$$
Obviously, the solutions of problem \eqref{bihar7} can be written as $${y}(t,x)=\sum\limits_{n\in\N^*}c_ne^{i\la_nt}{\Phi}_n(x)$$
where the Fourier coefficients are given by $$c_n:=\int_{0}^{\ell} {y}^0 (x) \overline{{\Phi}_n} (x)\rho(x)dx,~n\in\N^*,$$
and $\(c_n\)\in\ell^2\(\N^*\)$. Let us denote by $\mathcal{E}_{\theta}$ the energy associated to the space $\mathcal{H}_{\theta}$, then \begin{align*}
                            \mathcal{E}_{\theta}(t) &= \|y\|_{\theta}^2=\sum\limits_{n\in \N^*}
{\la}_n^{2\theta}|c_{n}e^{i\la_nt}|^2\\&= \sum\limits_{n\in \N^*}
{\la}_n^{2\theta}|c_{n}|^2 =\mathcal{E}_{\theta}(0),
                          \end{align*}
which establishes the conservation of energy along time. As consequence,  we have the following existence and uniqueness result for problem \eqref{bihar7}.
\begin{Prop}\label{pos}
Let $\theta\in\R$ and ${y}^0\in\mathcal{H}_{\theta}$. Then problem \eqref{bihar7} has a
unique solution ${y}\in C([0,T],\mathcal{H}_{\theta})$ and is given by the
following Fourier series \be
{y}(t,x)=\sum\limits_{n\in\N^*}c_ne^{i\la_nt}{\Phi}_n(x),\label{bihar9}\ee
where ${y}^0=\sum\limits_{n\in\N^*} c_n\Phi_n$. Moreover, the energy of the system \eqref{bihar7} is conserved along the time.
\end{Prop}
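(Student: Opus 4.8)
The plan is to read \eqref{bihar7} in the abstract form $i\partial_t y=\mathcal{A}y$, $y(0)=y^0$, with $\mathcal{A}$ the operator of Lemma \ref{rr}, and to produce the solution directly from the spectral data rather than from a general semigroup argument. Since Lemma \ref{rr} gives that $\mathcal{A}$ is positive, self-adjoint with compact resolvent and that $(\Phi_n)_{n\in\N^*}$ is an orthonormal eigenbasis of $L^2_\rho(0,\ell)$, the function defined by the series \eqref{bihar9} is the natural candidate, and the whole statement (membership in $\mathcal{H}_\theta$, time-continuity, the equation, uniqueness, and conservation) can be checked mode by mode using the spectral characterization \eqref{bihar8}. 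Abstractly this is just the fact that $-i\mathcal{A}$ generates a unitary $C_0$-group (Stone's theorem), which restricts to a unitary group on every $\mathcal{H}_\theta$ because it commutes with $\mathcal{A}^\theta$; I would mention this as the conceptual reason but carry out the elementary verification, since it simultaneously delivers the explicit formula and the energy identity.

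For existence I would start from $y^0=\sum_n c_n\Phi_n$ with $\sum_n\la_n^{2\theta}|c_n|^2=\|y^0\|_\theta^2<\infty$, and set $y(t)=\sum_n c_n e^{i\la_n t}\Phi_n$. Because $|e^{i\la_n t}|=1$, for each fixed $t$ the partial sums are Cauchy in $\mathcal{H}_\theta$ and $\|y(t)\|_\theta=\|y^0\|_\theta$, so $y(t)\in\mathcal{H}_\theta$. Time-continuity is the one point that needs a short argument: for $s\to t$ one has $\|y(t)-y(s)\|_\theta^2=\sum_n\la_n^{2\theta}|c_n|^2\,|e^{i\la_n t}-e^{i\la_n s}|^2$, where each summand tends to $0$ and is dominated by the summable sequence $4\la_n^{2\theta}|c_n|^2$; dominated convergence then gives $y\in C([0,T],\mathcal{H}_\theta)$. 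That $y$ solves the equation follows because each term $c_n e^{i\la_n t}\Phi_n$ solves it and the series converges in the relevant space; for $\theta$ large this is a strong solution, while for general (in particular negative) $\theta$ the identity $i\partial_t y=\mathcal{A}y$ is read in $\mathcal{H}_{\theta-1/2}$ (or tested against $\mathcal{D}(\mathcal{A})$), which is exactly where one uses that $\mathcal{A}^\theta$ commutes with the time exponentials.

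Uniqueness I would obtain by projecting onto the eigenbasis. If $w\in C([0,T],\mathcal{H}_\theta)$ solves \eqref{bihar7} with $w(0)=0$, then its coefficients $w_n(t)=\langle w(t),\Phi_n\rangle_{L^2_\rho(0,\ell)}$ satisfy a homogeneous first-order scalar linear ODE (obtained by pairing the equation with $\Phi_n$ and using $\mathcal{A}\Phi_n=\la_n\Phi_n$) with $w_n(0)=0$, hence $w_n\equiv0$ for every $n$; since $(\Phi_n)$ is total, $w\equiv0$. The same projection applied to the constructed $y$ identifies its coefficients as $c_n e^{i\la_n t}$, confirming formula \eqref{bihar9}. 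Finally, conservation of energy is immediate and was already recorded before the statement: $\mathcal{E}_\theta(t)=\|y(t)\|_\theta^2=\sum_n\la_n^{2\theta}|c_n e^{i\la_n t}|^2=\sum_n\la_n^{2\theta}|c_n|^2=\mathcal{E}_\theta(0)$.

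The routine parts are bookkeeping, so the only genuine obstacle is conceptual rather than computational: giving a clean meaning to ``solution'' uniformly in $\theta\in\R$, especially for negative $\theta$ where $y(t)\notin\mathcal{D}(\mathcal{A})$ and the PDE must be interpreted in the dual spaces of the scale $(\mathcal{H}_\theta)$. The cleanest way to handle this is to fix the interpretation once via the unitary group on $\mathcal{H}_\theta$, so that existence, uniqueness and the isometry are packaged together by Stone's theorem, and then to use the spectral characterization \eqref{bihar8} only to exhibit the explicit Fourier representation and the conservation law. The dominated-convergence step for time-continuity and the correct choice of space in which $i\partial_t y=\mathcal{A}y$ holds are the two places to be careful.
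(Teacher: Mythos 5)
Your proposal is correct and follows essentially the same route as the paper, which also obtains the solution as the Fourier series $\sum_n c_n e^{i\lambda_n t}\Phi_n$ in the eigenbasis of Lemma \ref{rr} and reads conservation of energy directly off the spectral norm \eqref{bihar8}. You merely fill in details the paper leaves implicit (dominated convergence for time-continuity, uniqueness by projecting onto the eigenbasis, and the interpretation of the equation for negative $\theta$), which is a welcome tightening but not a different method.
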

\section{Spectral analysis}\label{Spe}
In this section, we investigate the main properties of all the eigenvalues $(\la_{n})_{n\in\mathbb{N}^{*}}$ of the operator $\mathcal{A}$. On one hand, we prove that all the eigenvalues $(\la_n)_{n\in\N^*}$ are algebraically simple, and then,  the second derivatives of the corresponding eigenfunctions $(\Phi_{n})_{n\in\N^*}$ do not vanish at $x=\ell$. On another hand, we establish that the spectral gap
"$\big{|}{\lambda_{n+1}}-{\lambda_{n}}\big{|}"$ is uniformly positive.  To this end, we consider the following spectral problem which arises by applying
separation of variables to system \eqref{bihar7},
\begin{align}\label{biharr1}\begin{cases}
               (\sigma(x)\phi'')''-(q(x)\phi')'=\lambda \rho(x) \phi,~~x\in(0,\ell),  \\
               \phi(0)=\phi'(0)=\phi(\ell)=\phi'(\ell)=0.
             \end{cases}\end{align}
It is clear that, problem
\eqref{biharr1} is equivalent to the following
spectral problem
$$\mathcal{A}\phi=\la \phi,~~\phi\in \mathcal{D}(\mathcal{A}),$$
i.e., the eigenvalues $\(\la_n\)_{n\in\N^*}$ of the operator
$\mathcal{A}$ and problem \eqref{biharr1}
coincide together with their multiplicities. One has:
\begin{Theo}\label{Lem2}
All the eigenvalues $(\la_n)_{n\in\N^*}$ of the spectral problem
\eqref{biharr1} are simple such that :
$$0<\lambda_{1}<\lambda_{2}<.......<\lambda_{n}<.....
\underset{n\rightarrow +\infty}{\longrightarrow}+\infty.$$
Moreover, the corresponding
eigenfunctions $\(\Phi_n\)_{n\in\N^*}$ satisfy \be
\Phi_n''(\ell)\not=0~~\forall n\in\N^*. \label{biharr2}\ee
\end{Theo}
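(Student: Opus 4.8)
The plan is to deduce the simplicity statement from the stronger boundary property \eqref{biharr2}, and to prove the latter by a Pohozaev--Rellich multiplier identity. First I would record the reductions. Since the coefficients are real, the eigenvalues are real and, by the energy identity already computed in the proof of Lemma \ref{rr} (namely $\langle\mathcal A\phi,\phi\rangle=\int_0^\ell\sigma|\phi''|^2+q|\phi'|^2$), strictly positive; moreover each eigenfunction may be taken real. Writing the equation as a first-order system for the quasiderivative vector $(\phi,\phi',\sigma\phi'',(\sigma\phi'')'-q\phi')$, the hypotheses $\sigma\in H^2$, $q\in H^1$, $\sigma\ge\sigma_0>0$ guarantee unique solvability of the initial value problem. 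Consequently any eigenfunction, which satisfies $\phi(0)=\phi'(0)=0$, is determined by the pair $(\sigma\phi'')(0)$, $((\sigma\phi'')'-q\phi')(0)$; the linear map sending an eigenfunction to this pair is injective, so each eigenspace has dimension at most two.

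Next I would observe that \eqref{biharr2} already forces simplicity. Indeed, if some $\lambda$ had a two-dimensional eigenspace $E$, then the single linear condition $\phi''(\ell)=0$ would cut out a nonzero element of $E$, i.e.\ a nonzero eigenfunction with vanishing second derivative at $\ell$, contradicting \eqref{biharr2}. Hence every eigenvalue is geometrically simple; since $\mathcal A$ is self-adjoint (Lemma \ref{rr}) geometric and algebraic multiplicities coincide, and the non-strict ordering $0<\lambda_1\le\lambda_2\le\cdots$ of Lemma \ref{rr} sharpens to the strict chain claimed. It therefore remains to establish \eqref{biharr2}.

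For \eqref{biharr2} I would argue by contradiction, assuming $\phi''(\ell)=0$ for some (real) eigenfunction $\phi$, and exploit a weighted multiplier. Multiplying $(\sigma\phi'')''-(q\phi')'=\lambda\rho\phi$ by $p(x)\phi'$ with a weight $p\ge0$, $p(0)=0$, and integrating by parts repeatedly, the clamped conditions $\phi(0)=\phi'(0)=\phi(\ell)=\phi'(\ell)=0$ annihilate every boundary contribution except one coming from the principal part, leaving an identity of the form
\begin{equation*}
\tfrac12\,p(\ell)\sigma(\ell)\,\phi''(\ell)^2=\int_0^\ell\big(A(x)(\phi'')^2+B(x)(\phi')^2+\lambda\,C(x)\phi^2\big)\,dx,
\end{equation*}
with $A,B,C$ explicit in $\sigma,\sigma',q,q',\rho,\rho'$ and $p,p'$; for the crude choice $p(x)=x$ one finds $A=\tfrac32\sigma-\tfrac12 x\sigma'$, $B=\tfrac12(q-xq')$, $C=\tfrac12(\rho+x\rho')$. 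If $\phi''(\ell)=0$ the left-hand side vanishes, so I would arrange the right-hand side to be a sum of nonnegative terms one of which (the $\lambda C\phi^2$ integral) is strictly positive unless $\phi\equiv0$; that contradiction proves $\phi''(\ell)\neq0$.

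The main obstacle is precisely the sign of the right-hand side: for arbitrary admissible coefficients the multiplier $p=x$ gives weights $A,B,C$ that need not be nonnegative. I would address this by choosing $p$ adapted to the coefficients --- for instance built from the optical length $\gamma(x)=\int_0^x(\rho(t)/\sigma(t))^{1/4}\,dt$, which normalizes the principal part --- and, if necessary, adding a lower-order multiplier $c(x)\phi$ to cancel the indefinite cross terms and complete squares, after which the energy identity $\int_0^\ell\sigma(\phi'')^2+\int_0^\ell q(\phi')^2=\lambda\int_0^\ell\rho\phi^2$ can be used to trade the $(\phi'')^2$ integral against $\lambda\int_0^\ell\rho\phi^2$. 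Here the hypotheses \eqref{bihar3}--\eqref{bihar4} enter decisively: $\rho,\sigma\in H^2$ and $q\in H^1$ legitimize all the integrations by parts, while $\rho\ge\rho_0$, $\sigma\ge\sigma_0$, $q\ge0$ supply the positivity that forces $\phi\equiv0$. Carrying out this sign bookkeeping for general variable coefficients is where the real work concentrates.
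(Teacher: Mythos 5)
Your architecture is legitimate and your computation is correct as far as it goes: the reduction ``strong form of \eqref{biharr2} $\Rightarrow$ simplicity'' works, since solutions of the ODE with $\phi(0)=\phi'(0)=0$ form an at most two--dimensional family and a two--dimensional eigenspace would contain a nonzero element in the kernel of the functional $\phi\mapsto\phi''(\ell)$; and for $p(x)=x$ the Pohozaev identity you state does hold, with exactly the weights you give,
\begin{equation*}
\frac{\ell}{2}\,\sigma(\ell)\,\phi''(\ell)^2
=\int_0^\ell\Bigl(\tfrac32\sigma-\tfrac12 x\sigma'\Bigr)(\phi'')^2\,dx
+\frac12\int_0^\ell\bigl(q-xq'\bigr)(\phi')^2\,dx
+\frac{\lambda}{2}\int_0^\ell\bigl(\rho+x\rho'\bigr)\phi^2\,dx,
\end{equation*}
all other boundary terms being killed by the clamped conditions. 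The genuine gap is the one you yourself flag in the last paragraph, and it is not routine bookkeeping: the hypotheses \eqref{bihar3}--\eqref{bihar4} control neither the sign nor the size of $\rho'$, $\sigma'$, $q'$, so none of the three weights is nonnegative in general, and the repairs you sketch do not visibly close. Adding $\alpha$ times the energy identity shifts the weights to $A+\alpha\sigma$, $B+\alpha q$, $\lambda(C-\alpha\rho)$, so enlarging $A$ (needed wherever $x\sigma'$ is large and positive) destroys positivity of the $\phi^2$ weight, and $\alpha$ must be a constant. For a general multiplier $p$ with $p(0)=0$ the $(\phi')^2$ weight becomes $\tfrac12(qp'-q'p)-\tfrac12(\sigma p'')'$; since $q\equiv 0$ is admissible, this forces $\sigma p''$ to be nonincreasing, a nontrivial constraint coupling $p$ to $\sigma$ that cuts into the freedom you intend to use for the $(\phi'')^2$ weight $\tfrac32 p'\sigma-\tfrac12 p\sigma'$. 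The auxiliary multiplier $c(x)\phi$ introduces new cross terms $\int_0^\ell\sigma c'\phi'\phi''\,dx$ and $\int_0^\ell\sigma c''\phi\,\phi''\,dx$ whose completion of squares again consumes positivity from the $(\phi'')^2$ term. What you need is precisely a pseudoconvexity--type condition on $(\rho,\sigma,q)$, of the kind Carleman/multiplier methods for variable--coefficient fourth--order operators are known to require; the theorem assumes no such condition, you have not produced an admissible triple $(p,c,\alpha)$ for general coefficients, and it is not at all clear one exists. As written, your argument proves \eqref{biharr2} (hence the theorem) only under additional monotonicity or smallness assumptions on the coefficient derivatives.

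The paper sidesteps this entirely with a qualitative tool that needs only $\rho,\sigma>0$ and $q\ge 0$: Lemma \ref{simplctyLs1m}, a Leighton--Nehari--type sign--propagation result for $(\sigma u'')''-(qu')'-\rho u=0$ taken from \cite{Hedibenamara2}. For \eqref{biharr2}: if $\Phi_n''(\ell)=0$ then $\Phi_n(\ell)=\Phi_n'(\ell)=\Phi_n''(\ell)=0$, while uniqueness for the initial value problem forces $\mathcal{T}\Phi_n(\ell)=(\sigma\Phi_n'')'(\ell)-q(\ell)\Phi_n'(\ell)\neq 0$; propagating signs leftward from $x=\ell$ via the second statement of the lemma makes $\Phi_n$, $-\Phi_n'$ positive on all of $[0,\ell)$, contradicting the clamped conditions at $x=0$. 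Simplicity is obtained similarly, by showing the solution set of the three--condition problem \eqref{biharr3} is one--dimensional (two independent solutions would produce a combination with $\phi(0)=\phi'(0)=\phi''(0)=0$ and $\phi'(\ell)=0$, again contradicting the lemma), with self--adjointness upgrading geometric to algebraic simplicity --- the same upgrade you invoke. If you want to salvage your route, you must either prove existence of an admissible weight $p$ for every coefficient triple satisfying \eqref{bihar3}--\eqref{bihar4} --- which in substance amounts to reproving the sign--propagation lemma --- or import Lemma \ref{simplctyLs1m} as the paper does.
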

Our main tool in proving this is the following result \cite[Lemma 3.2]{Hedibenamara2}.
\begin{Lem}\label{simplctyLs1m}
Let $u$ be a nontrivial solution the linear fourth
order differential equation defined on the interval $\[a,b\]$,
$a>b$: $$ (\sigma(x) u'')''-(q(x) u')'-\rho(x)
u=0, $$ where the functions $\rho(x)>0$, $\sigma(x)>0$ and $q(x)\geq0$.
 If $u, u',
u''$ and \\$\mathcal{T}u= (\si(x)u'')' - q(x)u'$ are nonnegative at $x=a$ (but
not all zero), then they are positive for all $x>a$. If $u, -u',
u''$ and $\(-\mathcal{T}u\)$ are nonnegative at $x=b$ (but not all zero), then
they are positive for all $x<b$.
\end{Lem}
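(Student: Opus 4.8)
The plan is to recast the scalar fourth-order equation as a first-order linear system in the natural \emph{quasi-derivatives} and to observe that, under the sign hypotheses on $\rho,\sigma,q$, this system is \emph{cooperative}: its coefficient matrix has only nonnegative entries, and its dependency structure is a single irreducible cycle. Introduce
$$
u_0=u,\qquad u_1=u',\qquad u_2=\sigma u'',\qquad u_3=\mathcal{T}u=(\sigma u'')'-qu'.
$$
Using the equation in the form $(\mathcal{T}u)'=(\sigma u'')''-(qu')'=\rho u$, a direct computation gives
$$
u_0'=u_1,\qquad u_1'=\tfrac{1}{\sigma}\,u_2,\qquad u_2'=q\,u_1+u_3,\qquad u_3'=\rho\,u_0,
$$
so that $U'=M(x)U$ with $U=(u_0,u_1,u_2,u_3)^{\top}$ and every entry of $M$ equal to one of $0,\ 1,\ 1/\sigma,\ q,\ \rho$, hence nonnegative because $\sigma>0$, $\rho>0$, $q\ge0$. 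I stress that choosing the quasi-derivative $u_2=\sigma u''$ (rather than $u''$) is exactly what removes the a priori sign-indefinite term $\sigma'/\sigma$ and produces this nonnegative matrix; since $\sigma>0$, the hypothesis $u''\ge0$ is equivalent to $u_2\ge0$, so the stated sign conditions are precisely $U\ge0$ componentwise.

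First I would show that nonnegativity propagates. Let $J$ be the set of $\xi>a$ such that $U(x)\ge0$ componentwise for all $x\in[a,\xi]$. On the interior of $J$ each right-hand side above is a nonnegative combination of nonnegative components, so every $u_i'\ge0$ and every $u_i$ is nondecreasing; hence $u_i(x)\ge u_i(a)\ge0$, and by continuity the inequalities persist at the right endpoint of any such interval. Thus $J$ is relatively open and closed and contains $a$, so $U\ge0$ on all of $[a,b]$, with each component nondecreasing.

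Next I would upgrade nonnegativity to strict positivity using the cyclic structure. The dependency graph is the single $4$-cycle $u_0\to u_3\to u_2\to u_1\to u_0$: the growth of $u_3$ is driven by $u_0$ through $\rho$, that of $u_2$ by $u_3$, that of $u_1$ by $u_2$ through $1/\sigma$, and that of $u_0$ by $u_1$. Since the data are not all zero, some $u_{i_0}(a)>0$, and monotonicity gives $u_{i_0}\ge u_{i_0}(a)>0$ for $x\ge a$. Following the cycle one step at a time then forces the next component to be strictly positive for $x>a$: for instance if $u_0(a)>0$ then $u_3'=\rho u_0>0$, so $u_3(x)>u_3(a)\ge0$; then $u_2'=qu_1+u_3\ge u_3>0$, so $u_2>0$; then $u_1'=u_2/\sigma>0$, so $u_1>0$; then $u_0'=u_1>0$. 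The three remaining choices of $i_0$ are identical after cyclic relabeling. This yields $u,u',u''$ and $\mathcal{T}u$ all positive for $x>a$, which is the first assertion.

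Finally, the left-endpoint statement follows by the reflection $t=-x$, $w(t)=u(-t)$. One checks that $w$ solves an equation of the same form with reflected coefficients $\tilde\sigma(t)=\sigma(-t)$, $\tilde q(t)=q(-t)$, $\tilde\rho(t)=\rho(-t)$, which still satisfy $\tilde\sigma>0$, $\tilde\rho>0$, $\tilde q\ge0$, and that the quasi-derivatives transform by $w=u$, $w'=-u'$, $w''=u''$, $\mathcal{T}w=-\mathcal{T}u$ (the odd-order quasi-derivatives change sign, the even-order ones do not). Hence the hypotheses $u,-u',u'',-\mathcal{T}u\ge0$ at $x=b$ say exactly that all four quasi-derivatives of $w$ are nonnegative, not all zero, at $t=-b$; the first part applied to $w$ gives strict positivity for $t>-b$, i.e. for $x<b$, which is $u,-u',u'',-\mathcal{T}u>0$. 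I expect the only real obstacle to be the algebraic bookkeeping that exposes the correct quasi-derivatives making $M$ nonnegative and the cycle irreducible; once that structure is identified, the proof is a monotonicity-and-continuity bootstrap together with a reflection.
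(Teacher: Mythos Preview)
The paper does not prove this lemma at all; it is quoted verbatim as \cite[Lemma~3.2]{Hedibenamara2} and used as a black box in the proof of Theorem~\ref{Lem2}. So there is no ``paper's own proof'' to compare against, and your argument should be judged on its own.

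Your approach is correct and is in fact the natural one. Recasting the equation as the first-order system $U'=M(x)U$ in the quasi-derivatives $u_0=u$, $u_1=u'$, $u_2=\sigma u''$, $u_3=(\sigma u'')'-qu'$ is exactly the right move: it is the choice $u_2=\sigma u''$ (not $u''$) that makes every entry of $M$ nonnegative, and since $\sigma>0$ the sign hypotheses on $u''$ and on $u_2$ are equivalent. The irreducible $4$-cycle $u_0\to u_3\to u_2\to u_1\to u_0$ you identify (with the extra edge $u_1\to u_2$ when $q>0$) then forces strict positivity of all components once one is positive, and the reflection $t=-x$ handles the second statement exactly as you say.

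The only place your write-up is a little soft is the claim that the set $J$ is relatively open. You show that on $J$ every $u_i$ is nondecreasing, hence $U(\xi)\ge U(a)\ge0$ at the right end of any interval in $J$, and closedness follows by continuity; but openness needs one more sentence. The cleanest fix is to bypass the connectedness argument entirely: since $M(x)\ge0$ entrywise, every term in the Peano--Baker (Picard) series for the fundamental matrix
\[
\Phi(x,a)=I+\int_a^x M(s_1)\,ds_1+\int_a^x\!\!\int_a^{s_1} M(s_1)M(s_2)\,ds_2\,ds_1+\cdots
\]
is an integral of products of entrywise nonnegative matrices, hence $\Phi(x,a)\ge0$ entrywise for $x\ge a$, and $U(x)=\Phi(x,a)U(a)\ge0$ immediately. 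With that in place, your monotonicity and cycle arguments give strict positivity for $x>a$, and the proof is complete.
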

\begin{proof1} {\it of Theorem \ref{Lem2}.}
First, we prove that the set $\mathcal{E}_\la$, of solutions of the following
boundary value problem
\begin{equation}\label{biharr3}
\left\{
\begin{array}{ll}
(\se(x)\phi'')'' -(q(x)\phi')'= \la \rho(x) \phi,~~x\in(0,\ell), \\
\phi(0) = \phi'(0)=\phi'(\ell)=0,
\end{array}
\right .
\end{equation}
is one-dimensional subspace for $\lambda>0$, i.e., $\Dim\,\mathcal{E}_{\la}=1$. Suppose that there exist two linearly independent
solutions $\phi_1$ and $\phi_2$ of problem \eqref{biharr3}. Both $ \phi''_{1}(0)$ and $
\phi''_{2}(0)$ must be different from zero since otherwise it would
follow from the first statement of Lemma~\ref{simplctyLs1m} that $
\phi'_i(\ell)>0~(i=1,2)$ which contradicts the last
boundary condition in \eqref{biharr3}. In view of the
assumptions about $\phi_1$ and $\phi_2$, the solution
 $$\phi(x)=  \phi''_{1}(0)\phi_{2}(x) -  \phi''_{2}(0)\phi_{1}(x)$$
  satisfies
 $$\phi(0)= \phi'(0)= \phi''(0)=0 \hbox{ and } \phi'(\ell)=0.$$ This again
 contradicts the first statement of
Lemma~\ref{simplctyLs1m} unless $\phi\equiv0$. Therefore, $$\Dim\,\mathcal{E}_{\la}=1,$$
and then,  all the  eigenvalues $\(\la_n\)_{n\in\N^*}$ of problem \eqref{biharr1} are geometrically simple. On the other hand, by
Lemma \ref{rr}, the operator $\mathcal{A}$ is self-adjoint in
$L^2_\rho(0,\ell)$, and this implies that all the eigenvalues
$\(\la_n\)_{n\in\N^*}$ are algebraically simple. Now, we prove \eqref{biharr2}. Let $\{\la_n,\Phi_{n}\}$ $(n\geq1)$
be an eigenpair of problem \eqref{biharr1}, and assume that $\Phi_{n}''\(\ell\)=0,$ for some $n\in\N^*$. Then the eigenfunctions $\Phi_{n}$  satisfy the boundary conditions  $$\Phi_{n}\(\ell\)=\Phi_{n}'\(\ell\)=\Phi_{n}''\(\ell\)=0, \hbox{ for some } n\in\N^*,$$
and then, by standard theory of differential equations $$\mathcal{T}\Phi_{n}(\ell)= (\si\(\ell\)\Phi_{n}\(\ell\)'')' - q\(\ell\)\Phi_{n}\(\ell\)'\neq0, \hbox{ for some } n\in\N^*.$$ Without loss of generality, let $\mathcal{T}\Phi_{n}\(\ell\)<0$ for some $n\in\N^*$. Since $\lambda_n>0$, it
follows from the second statement of Lemma~\ref{simplctyLs1m}, that
$$\varphi_n(x)>0,~\varphi_n'(x)<0,~\varphi_n''(x)>0 \hbox{ and } \mathcal{T}\varphi_n(x)<0,~\forall~x\in\[0,\ell\],$$ but this contradicts the boundary
conditions $\Phi_{n}\(0\)=\Phi_{n}'\(0\)=0$ . Thus, $$\Phi_n''(\ell)\not=0~~ \forall n\in\N^*,$$ and this finalizes the proof of the
theorem.
\end{proof1}\\
\indent Next we establishes the asymptotic
behavior of the spectral gap ${\la_{n+1}}-{\la_{n}}$ for
large $n$. Namely, we have the following theorem:
\begin{Theo}\label{SP2}  The eigenvalues $(\la_{n})_{n\in\N^*}$ of the
associated spectral problem \eqref{biharr1} satisfy the
following asymptotic: \be \sqrt[4]{{\la_n}}:=\mu_n= \frac{\pi}{\ga}\(n-\frac{1}{2}\) +\mathcal{O}\(\frac{1}{\exp\(n\)}\),~\gamma={\int^\ell_0
\sqrt[4]{\frac{\rho(t)}{\sigma(t)}}
 dt}.\label{biharr4}\ee
Moreover,
 \be |{\la_{n+1}}-{\la_n}|\asymp {n^3}\(\frac{\pi}{\gamma}\)^{4},~\hbox{ as } n\to\infty.\label{biharr5}\ee
\end{Theo}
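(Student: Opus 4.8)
The plan is to obtain a precise asymptotic expansion of the quartic roots $\mu_n=\sqrt[4]{\la_n}$ by a careful WKB/Liouville-type analysis of the fourth-order spectral problem \eqref{biharr1}, and then to extract the gap \eqref{biharr5} by a direct subtraction. First I would rewrite the eigenvalue equation by performing the Liouville transformation that turns the variable-coefficient operator $(\se\phi'')''-(q\phi')'$ into a perturbation of the constant-coefficient biharmonic operator. Concretely, introduce the new independent variable $\xi(x)=\int_0^x\sqrt[4]{\rho(t)/\se(t)}\,dt$, so that $\xi(\ell)=\ga$, together with an amplitude factor $\psi=\se^{1/8}\rho^{3/8}\phi$ (or the analogous weight dictated by the quartic symbol); this normalizes the leading symbol to $\psi''''=\mu^4\psi$ modulo lower-order terms whose coefficients involve $\rho,\se\in H^2$ and $q\in H^1$. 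The spatial interval $[0,\ell]$ maps to $[0,\ga]$, which is why $\ga$ rather than $\ell$ governs the asymptotics.

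Second, I would construct the four linearly independent WKB solutions of the transformed equation associated with the four quartic roots $1,i,-1,-i$ of the symbol. For large $\mu$ these behave like $\exp(\pm\mu\xi)$ and $\exp(\pm i\mu\xi)$ up to amplitude corrections and an exponentially small or polynomially small remainder; the regularity hypotheses \eqref{bihar3} are exactly what is needed to control these remainders. The eigenvalue condition is then the vanishing of the $4\times 4$ determinant obtained by imposing the four clamped boundary conditions $\phi(0)=\phi'(0)=\phi(\ell)=\phi'(\ell)=0$ on the general solution. Expanding this determinant and keeping the dominant contributions, the two growing exponentials $e^{\pm\mu\ga}$ dominate and force the characteristic equation to take, to leading order, the form $\cos(\mu\ga)\cosh(\mu\ga)=\pm 1$ or, after dividing by the large factor $\cosh(\mu\ga)$, a relation of the type $\cos(\mu\ga)+\mathcal O(e^{-\mu\ga})=0$. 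This yields $\mu_n\ga=\pi(n-\tfrac12)+\mathcal O(e^{-n})$, which is precisely \eqref{biharr4}; the exponentially small error reflects the $\cosh^{-1}$ damping of the oscillatory cross terms.

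Third, from the asymptotics \eqref{biharr4} the gap in the $\la$-scale follows by writing $\la_n=\mu_n^4$ and expanding
\begin{align*}
\la_{n+1}-\la_n=\mu_{n+1}^4-\mu_n^4=(\mu_{n+1}-\mu_n)(\mu_{n+1}+\mu_n)(\mu_{n+1}^2+\mu_n^2).
\end{align*}
Since $\mu_{n+1}-\mu_n=\pi/\ga+\mathcal O(e^{-n})$ while $\mu_n\sim (\pi/\ga)\,n$, the three factors contribute $\pi/\ga$, $\sim 2(\pi/\ga)n$, and $\sim 2(\pi/\ga)^2 n^2$ respectively, so the product is asymptotically $4(\pi/\ga)^4 n^3$; in particular $|\la_{n+1}-\la_n|\asymp n^3(\pi/\ga)^4$, which is \eqref{biharr5}. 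I would state the two-sided bound $\asymp$ as following directly from this leading term dominating the lower-order corrections uniformly for large $n$.

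The main obstacle I anticipate is the rigorous control of the WKB remainders and, relatedly, the justification that the oscillatory off-diagonal terms in the boundary determinant are genuinely of order $e^{-\mu\ga}$ rather than merely $o(1)$, since obtaining the sharp exponential error in \eqref{biharr4} (as opposed to a weaker $\mathcal O(1/n)$) requires tracking the exact cancellations in the determinant and the precise decay of the subdominant solutions. This is delicate because the coefficients are only $H^2$ and $H^1$, so the standard smooth WKB asymptotic series is unavailable and one must instead use a Volterra integral-equation fixed-point argument to bound the error in terms of the total variation of the transformed coefficients. Once the characteristic determinant is shown to reduce to $\cos(\mu\ga)$ plus an exponentially damped perturbation, the remaining steps are the routine algebra sketched above.
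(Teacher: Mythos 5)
Your proposal is correct and follows essentially the same route as the paper: asymptotic (WKB) fundamental solutions in the Liouville variable $X=\int_0^x\sqrt[4]{\rho/\sigma}\,dt$ (which the paper quotes from Fedoryuk and Naimark \cite{F,N} rather than re-deriving via a Volterra fixed-point argument), reduction of the clamped boundary conditions to the characteristic equation $\cos(\mu\gamma)\cosh(\mu\gamma)=1$, hence $\cos(\mu\gamma)+\mathcal{O}(e^{-\mu\gamma})=0$ and, via Rouch\'e's theorem, $\mu_n=\frac{\pi}{\gamma}(n-\frac{1}{2})+{}$small, followed by the same factorization of $\lambda_{n+1}-\lambda_n$ into lower-order differences to obtain the gap. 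Your closing caveat is apt: with $\rho,\sigma\in H^2$ and $q\in H^1$ the amplitude corrections $[1]=1+\mathcal{O}(\mu^{-1})$ in the fundamental solutions dominate the $e^{-\mu\gamma}$ terms in the boundary determinant, so the rigorously justified remainder in \eqref{biharr4} is $\mathcal{O}(1/n)$ rather than exponential (a weakness present in the paper's own statement as well), but, as you observe, this weaker error already suffices for the two-sided gap estimate \eqref{biharr5}.
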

\begin{proof} It is known (e.g., \cite[Chapter~5, p.235-239]{F} and
\cite[Chapter~2]{N}) that for $\la\in\mathbb{C}$, the fourth-order linear differential equation \be \label{biharr6}
(\se(x)\phi'')'' -(q(x)\phi')'= \la \rho(x) \phi,~~x\in(0,\ell),\ee 
has four fundamental solutions $\{{\phi}_i(x,\lambda)\}_{i=1}^{i=4}$ satisfying the asymptotic forms 
\bean
\label{biharr7}\left\{
\begin{array}{ll} {\phi}_i(x,\la)
=\(\[\rho(x)\]^{\frac{3}{4}}\[\sigma(x)\]^{\frac{1}{4}}\)^{-\frac{1}{2}}\exp\left\{\mu
w_i \displaystyle\int^x_0 \sqrt[4]{\frac{\rho(t)}{\sigma(t)}}
 dt\right\} [1],\\
~~\\
{\phi}_{i}^{(k)}(x,\la) = (\mu w_i)^{k}
\(\frac{\rho(x)}{\sigma(x)}\)^{\frac{k}{4}}
\(\[\rho(x)\]^{\frac{3}{4}}\[\sigma(x)\]^{\frac{1}{4}}\)^{-\frac{1}{2}}
\exp\left\{\mu w_i\displaystyle \int^x_0 \sqrt[4]{\frac{\rho(t)}{\sigma(t)}}
 dt\right\} [1],\end{array}
\right .\eean where ${\mu}^4=\la$, ${w_i}^4=1$,
${\phi}^{(k)}:=\frac{\partial^k{\phi}}{{\partial x^k}}$ for
$k\in\{1, 2, 3\}$, and
 $\[1\]=1 + \mathcal{O}(\mu^{-1})$ uniformly as $\mu\rightarrow\infty$ in a sector
$\mathcal{S}_\tau=\{\mu\in\C \hbox{~~such
that~~}0\leq\arg(\mu+\tau)\leq\frac{\pi}{4}\}$ where $\tau$ is any
fixed complex number. It is convenient to rewrite these asymptotes
in the form\bea
&&{\phi}_1(x,\la)=\(\[\rho(x)\]^{\frac{3}{4}}\[\sigma(x)\]^{\frac{1}{4}}\)^{-\frac{1}{2}}\cos\(\mu \displaystyle \int^x_0 \sqrt[4]{\frac{\rho(t)}{\sigma(t)}}
 dt\)[1],\\
&&{\phi}_2(x,\la)=\(\[\rho(x)\]^{\frac{3}{4}}\[\sigma(x)\]^{\frac{1}{4}}\)^{-\frac{1}{2}}\cosh\(\mu \displaystyle \int^x_0 \sqrt[4]{\frac{\rho(t)}{\sigma(t)}}
 dt\)[1],\\
&&{\phi}_3(x,\la)=\(\[\rho(x)\]^{\frac{3}{4}}\[\sigma(x)\]^{\frac{1}{4}}\)^{-\frac{1}{2}}\sin\(\mu \displaystyle \int^x_0 \sqrt[4]{\frac{\rho(t)}{\sigma(t)}}
 dt\)[1],\\
&&{\phi}_4(x,\la)=\(\[\rho(x)\]^{\frac{3}{4}}\[\sigma(x)\]^{\frac{1}{4}}\)^{-\frac{1}{2}}\sinh\(\mu \displaystyle \int^x_0 \sqrt[4]{\frac{\rho(t)}{\sigma(t)}}
 dt\)[1]. \eea Hence every solution $\phi(x,\la)$ of equation
\eqref{biharr6} can be written in the following asymptotic form \bean
\phi(x,\la)=\zeta(x)\Big(C_1 \cos\(\mu X\) + C_2 \cosh\(\mu X\)+ C_3
\sin\(\mu X\)
+ C_4 \sinh\(\mu X\)\Big)\[1\]\label{biharr8}
\eean and from \eqref{biharr7}, we have also  \bean
\phi^{(k)}(x,\la)&=&\mu^k\zeta(x)
\(\frac{\rho(x)}{\sigma(x)}\)^{\frac{k}{4}}\Big{(}C_1 \cos^{(k)}(\mu
X) + C_2
\cosh^{(k)}(\mu X) + C_3 \sin^{(k)}(\mu X) \nonumber\\
 &&+ C_4 \sinh^{(k)}(\mu X)\Big{)}\[1\],~~\mbox{as}~~\mu \rightarrow\infty,~~
k\in\{1, 2, 3\}, \label{biharr9} \eean
 where $C_i, i=1,2,3,4$ are constants and \be
\zeta(x)=\(\[\rho(x)\]^{\frac{3}{4}}\[\sigma(x)\]^{\frac{1}{4}}\)^{-\frac{1}{2}}
\hbox{~~and~~} X=\int^x_0 \sqrt[4]{\frac{\rho(t)}{\sigma(t)}}
 dt.\label{biharr10}\ee If $\phi(x,\la)$ satisfies
the boundary conditions $\phi(0,\la)=\phi'(0,\la)=0$, then by the asymptotics
\eqref{biharr8} and \eqref{biharr9}, we obtain for large positive $\mu$
the asymptotic estimate
$$\begin{cases}
\zeta(0)\(C_1+C_2\)[1]=0, \\
\mu\zeta(0)
\(\frac{\rho(0)}{\sigma(0)}\)^{\frac{1}{4}}\(C_3+C_4\)[1]=0.
                        \end{cases}$$
and then,
\bean \phi(x, \la)=C_1
\zeta(x)\(\cos\(\mu X\)- \cosh\(\mu X\)\)\[1\]+ C_3\(
\sin\(\mu X\)-\sinh\(\mu X\)\)\[1\]
\label{biharr11}
\eean and \bean \phi'(x, \la) = {\mu
\zeta(x)}\(\frac{\rho(x)}{\sigma(x)}\)^{\frac{1}{4}}
          \Big(C_1\(\sinh\(\mu X\)-\sin\(\mu X\)\)+ C_3\(
\cos\(\mu X\)-\cosh\(\mu X\)\)\Big)\[1\],\label{biharr12}
\eean
From the boundary conditions $\phi(\ell,\lambda)=\phi'(\ell,\lambda)=0$, and the above asymptotics one has:
\bean\begin{cases}
C_1 \(\cos\(\mu \ga\)- \cosh\(\mu \ga\)\)\[1\]+ C_3\(
\sin\(\mu \ga\)-\sinh\(\mu \ga\)\)\[1\]=0, \\
C_1\(-\sin\(\mu \ga\)-\sinh\(\mu \ga\)\)\[1\]+ C_3\(
\cos\(\mu \ga\)-\cosh\(\mu \ga\)\)\[1\]=0,
                        \end{cases}\label{biharr13}\eean
where the constant $\ga$ is defined by \be \ga=\int^l_0
\sqrt[4]{\frac{\rho(t)}{\sigma(t)}}
 dt.\label{biharr14}\ee
This homogeneous system of equations in the unknowns $C_1$ and $C_2$ admits a non-trivial solution if and only if the corresponding determinant is zero, i.e., $$\(
\(\cos\(\mu \ga\)-\cosh\(\mu \ga\)\)^2+\sin^2\(\mu \ga\)-\sinh^2\(\mu \ga\)\)\[1\]=0$$
Equivalently $${\mu
\zeta(\ell)}\(\frac{\rho(\ell)}{\sigma(\ell)}\)^{\frac{1}{4}}\(\cos\(\mu \ga\)\cosh\(\mu \ga\)-1\)\[1\]=0.$$
Then by \eqref{biharr12}, one gets that the eigenvalues $\(\la_n\)_{n\in\N^*}$  are solution of following asymptotic characteristic equation
 $${\mu
\zeta(\ell)}\(\frac{\rho(\ell)}{\sigma(\ell)}\)^{\frac{1}{4}}\exp\(\mu \ga\)\(\cos\(\mu \ga\)-\frac{1}{\exp\(\mu \ga\)}\) \[1\]=0,$$
 which can also be rewritten as
\be \cos\(\mu \ga\)+\mathcal{O}\(\frac{1}{\exp\(\mu \ga\)}\)=0.\label{biharr15}\ee Since
the solutions of the equation $\cos\(\mu \ga\) = 0$ are given by
$$\widetilde{\mu_{n}}=\frac{\pi}{\ga}\(n-\frac{1}{2}\),~n=0,1,2,..., $$ it follows
from Rouch\'{e}'s theorem that the solutions of \eqref{biharr15} satisfy
the following asymptotic \bean \mu_{n}&=& \widetilde{\mu_{n}} +
\delta_n \nonumber\\
&=& \frac{\pi}{\ga}\(n-\frac{1}{2}\) +\mathcal{O}\(\frac{1}{\exp\(n\)}\),\label{biharr16}\eean which proves \eqref{biharr4}.
Furthermore, \bea \sqrt{\la_n}&=&
\frac{\pi^2}{\ga^2}\(n-\frac{1}{2}\)^2 +\mathcal{O}\(\frac{n}{\exp\(n\)}\),\\
&=&\frac{\pi^2}{\ga^2}\( n^2-n\)+\mathcal{O}\(1\).\eea
 and hence
\begin{align*}
  {\la_{n+1}}-{\la_n} & = \(\sqrt{\la_{n+1}}-\sqrt{\la_n} \) \(\sqrt{\la_{n+1}}+\sqrt{\la_n} \)\\
  &= \frac{\pi^4}{\ga^4}\(\(n+1\)^2-n^2+\mathcal{O}\(1\)\)\(\(n+1\)^2+n^2-2n+\mathcal{O}\(1\)\)\\
  &= \frac{\pi^4}{\ga^4}n^3+\mathcal{O}\(n^2\).
\end{align*}
The theorem is proved.
\end{proof}
We conclude this section with the following result about the asymptotics of the eigenfunctions $(\Phi_{n})_{n\in\N^*}$ of the spectral problem \eqref{biharr1}.
\begin{Prop}\label{Lem22}
Let us normalize the eigenfunctions $(\Phi_{n})_{n\in\N^*}$ of the spectral problem \eqref{biharr1} in the sense that
$\displaystyle\lim_{n\to\infty}\|{\Phi}_n\|_{L^2_\rho(0,\ell)}=1$. One has, the following asymptotic estimates:
\begin{align}
 \Phi_{n}(x)&=\frac{2\zeta(x)}{\ga\exp{\(\mu_n \ga\)}}
\(\cos\(\mu_n \ga\)- \cosh\(\mu_n \ga\)\)\(\cos\(\mu_n X\)- \cosh\(\mu_n X\)\)\[1\]\nonumber\\
&~+\frac{2\zeta(x)}{\ga\exp{\(\mu_n \ga\)}}\(\sin\(\mu_n \ga\)+\sinh\(\mu_n \ga\)\)\(
\sin\(\mu_n X\)-\sinh\(\mu_n X\)\)\[1\], \label{biharr17}
\end{align}
where the quantities $\zeta$, $X$  and $\gamma$ are given by \eqref{biharr10} and \eqref{biharr14}, respectively.
Furthermore,  \be \displaystyle\lim_{n\rightarrow\infty} \dfrac{\left|
{\Phi_{n}''(\ell)}\right|}{\sqrt{\la_n}}=
\(\frac{2\zeta(\ell)}{\ga}\(\frac{\rho(\ell)}{\sigma(\ell)}\)^{\frac{1}{2}}\).\label{biharr18}\ee
\begin{equation*}
\end{equation*}
\end{Prop}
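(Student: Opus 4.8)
The plan is to resume the computation from the proof of Theorem~\ref{SP2}, where a solution of \eqref{biharr6} obeying the two left conditions $\phi(0)=\phi'(0)=0$ was already reduced to the two-parameter family \eqref{biharr11}--\eqref{biharr12}. First I would impose the two remaining boundary conditions at $x=\ell$, that is, the homogeneous $2\times2$ system \eqref{biharr13}. At an eigenvalue $\mu=\mu_n$ its determinant vanishes, so the pair $\(C_1,C_3\)$ must be a multiple of the null vector read off from the \emph{second} equation of \eqref{biharr13}, namely $\(C_1,C_3\)=\kappa_n\(\cos\(\mu_n\ga\)-\cosh\(\mu_n\ga\),\,\sin\(\mu_n\ga\)+\sinh\(\mu_n\ga\)\)$ for a single constant $\kappa_n$. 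Substituting this pair into \eqref{biharr11} reproduces exactly the bracket displayed in \eqref{biharr17}, so that all that remains for \eqref{biharr17} is to fix $\kappa_n$ by the normalization $\|\Phi_n\|_{L^2_\rho(0,\ell)}\to1$.

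To determine $\kappa_n$ I would compute this norm asymptotically. The \emph{key algebraic observation} is that, after expanding the product in \eqref{biharr17}, the leading exponentials cancel through the identity $\cosh\(\mu_n\ga\)\cosh\(\mu_n X\)-\sinh\(\mu_n\ga\)\sinh\(\mu_n X\)=\cosh\(\mu_n\(\ga-X\)\)$. As a result $\Phi_n$ splits into a bounded oscillatory part proportional to $\zeta(x)\(\sin\(\mu_n X\)-\cos\(\mu_n X\)\)$ plus two boundary-layer terms that are exponentially localized near $x=0$ and near $x=\ell$. Because $\zeta^2\rho=\(\rho/\si\)^{1/4}=X'$, the substitution $dX=\(\rho/\si\)^{1/4}\,dx$ converts every $x$-integral into one over $X\in\(0,\ga\)$; in particular the two boundary layers contribute only $\mathcal{O}\(\mu_n^{-1}\)$ to $\|\Phi_n\|^2$, whereas for the oscillatory part $\(\sin-\cos\)^2=1-\sin\(2\mu_n X\)$ and the Riemann--Lebesgue lemma leave $\int_0^\ell\zeta^2\rho\,dx=\int_0^\ga dX=\ga$. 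This pins down $\kappa_n$ and proves \eqref{biharr17}.

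For \eqref{biharr18} I would differentiate \eqref{biharr17} twice by means of \eqref{biharr9} with $k=2$: to leading order each basis function acquires a factor $\mu_n^2\(\rho/\si\)^{1/2}$ together with the sign pattern $\cos\mapsto-\cos$, $\cosh\mapsto\cosh$, $\sin\mapsto-\sin$, $\sinh\mapsto\sinh$, while the derivatives of the slowly varying prefactor $\zeta$ and of the remainders $[1]$ are of lower order in $\mu_n$ and may be dropped. Evaluating at $x=\ell$, i.e. $X=\ga$, and inserting the coefficients $\(C_1,C_3\)$ from the first step, the bracket collapses --- after using $\cos^2+\sin^2=1$ and $\cosh^2-\sinh^2=1$ --- to the single term $-2\kappa_n\sin\(\mu_n\ga\)\sinh\(\mu_n\ga\)$, so that $\Phi_n''(\ell)\sim-2\kappa_n\,\mu_n^2\,\zeta(\ell)\(\rho(\ell)/\si(\ell)\)^{1/2}\sin\(\mu_n\ga\)\sinh\(\mu_n\ga\)$. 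Inserting the value of $\kappa_n$ from the second step, together with $\sinh\(\mu_n\ga\)\exp\(-\mu_n\ga\)\to\tfrac12$ and --- crucially --- the characteristic equation \eqref{biharr15}, which forces $\cos\(\mu_n\ga\)\to0$ and hence $|\sin\(\mu_n\ga\)|\to1$, and recalling that $\sqrt{\la_n}=\mu_n^2$, I would obtain exactly the limit \eqref{biharr18}.

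The main obstacle here is careful bookkeeping rather than any isolated hard estimate. One must verify that the remainder symbols $[1]=1+\mathcal{O}(\mu^{-1})$ are uniform in $x\in[0,\ell]$ (this is precisely what the sectorial asymptotics \eqref{biharr7} provide), that differentiating them twice does not perturb the leading $\mu_n^2$ term, and that the exponentially growing pieces in the product really do cancel up to $\mathcal{O}(\mu_n^{-1})$, so that the boundary-layer remainders are integrable and indeed contribute $o(1)$ to the norm. Propagating these $[1]$ factors consistently through the two differentiations and through the change of variables $X=X(x)$ is the delicate part; the rest is a routine, if lengthy, trigonometric and hyperbolic computation.
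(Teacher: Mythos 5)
Your proposal is correct and follows essentially the same route as the paper's own proof: you solve the degenerate system \eqref{biharr13} for $\(C_1,C_3\)$ up to a constant, fix that constant by computing the $L^2_\rho$-norm asymptotically via the change of variables $t=X$ (yielding the factor $\ga$), and then differentiate twice through \eqref{biharr9} so that the bracket at $X=\ga$ collapses to $2\sin\(\mu_n\ga\)\sinh\(\mu_n\ga\)$, after which \eqref{biharr15} forces $|\sin\(\mu_n\ga\)|\to1$ and $\sinh\(\mu_n\ga\)e^{-\mu_n\ga}\to\tfrac12$, giving \eqref{biharr18}. Your explicit tracking of the exponential cancellation and the boundary-layer terms in the norm computation is a slightly more careful rendering of the paper's shortcut $C_1\asymp e^{\mu_n\ga}/2\asymp C_3$, but it does not constitute a different method.
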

\begin{proof} If $\mu_n$ satisfies \eqref{biharr15}, then, by solving the homogeneous system of two equations \eqref{biharr13},
one gets \bean\begin{cases}
C_1 = C\(\cos\(\mu_n \ga\)- \cosh\(\mu_n \ga\)\)\[1\] \\
C_3= C\(\sin\(\mu_n \ga\)+\sinh\(\mu_n \ga\)\)\[1\].
                        \end{cases}\label{biharr19}\eean
for some constant $C\not=0$. From this, \eqref{biharr4} and \eqref{biharr12},
we obtain the following asymptotic estimate for the eigenfunctions $\phi(x,\lambda_n)$ of the problem
\eqref{biharr1}:\begin{align}
                      \phi(x,\lambda_n)&= C\zeta(x)\{\(\cos\(\mu_n \ga\)- \cosh\(\mu_n \ga\)\)\(\cos\(\mu_n X\)- \cosh\(\mu_n X\)\)\}\[1\]\nonumber\\
                      &+C\zeta(x)\{\(\sin\(\mu_n \ga\)+\sinh\(\mu_n \ga\)\)\(
\sin\(\mu_n X\)-\sinh\(\mu_n X\)\)\}\[1\].
\label{biharr11bis}
                    \end{align}
By \eqref{biharr16} and \eqref{biharr19} $$
C_1 \asymp\frac{\exp{\(\mu_n \ga\)}}{2}\asymp C_3,
                        $$
and then,
\bea \Phi_{n}(x)&\sim& C\zeta(x)\frac{\exp{\(\mu_n \ga\)}}{2}\(\sin\(\mu_n X\)-\cos\(\mu_n X\)
+\cosh\(\mu_n X\)-\sinh\(\mu_n X\)\)\\
&\sim& C\zeta(x)\frac{\exp{\(\mu_n \ga\)}}{2}\(\sin\(\mu_n X\)-\cos\(\mu_n X\)\),~\hbox{ as }
n \rightarrow\infty,\eea
where $\ga$ is defined by \eqref{biharr14}. By the change of variables $t=X$, one has
\begin{align*}
\int_{0}^{\ell}\xi^2(x)\sin^2\(\mu_nX\)\rho(x)dx&= \int_{0}^{\ell}\sin^2\(\mu_n\int^x_0
\sqrt[4]{\frac{\rho(t)}{\sigma(t)}}
 dt\)\sqrt[4]{\frac{\rho(x)}{\sigma(x)}}dx, \\
&=\int_{0}^{\gamma}\sin^2\(\mu_nt\)dt=\dfrac{\gamma}{2}.
\end{align*}
Similarly, we have \begin{align*}
                     &\int_{0}^{\ell}\xi^2(x)\cos^2\(\mu_nX\)\rho(x)dx=\dfrac{\gamma}{2}, \\
                     &\int_{0}^{\ell}\xi^2(x)\sin\(\mu_nX\)\cos\(\mu_nX\)\rho(x)dx=
\frac{\sin^2(\mu_n\gamma)}{2\mu_n}[1].
                   \end{align*}
Consequently, one gets \be \displaystyle\lim_{n\to\infty}\left\|\phi(x,\lambda_n)\right\|_{L^2_\rho(0,l)}=|C|
\frac{\ga\exp{\(\mu_n \ga\)}}{2}.\label{biharr20}\ee
We set \be \Phi_{n}(x):=\frac{\phi(x,\lambda_n)}
{\displaystyle\lim_{n\to\infty}\left\|\phi(x,\lambda_n)\right\|_{L^2_\rho(0,l)}}.\label{biharr21}\ee Then, $\(\Phi_{n}(x)\)_{n\in\N^*}$ are the normalized eigenfunctions of problem \eqref{biharr1} so that, $\displaystyle\lim_{n\to\infty}\left\|\Phi_{n}\right\|_{L^2_\rho(0,l)}=1$. Therefore, by \eqref{biharr11bis} and \eqref{biharr20}-\eqref{biharr21}, we get \eqref{biharr17}.

\medskip

In a similar way, from the asymptotics \eqref{biharr9}, \eqref{biharr16} and
\eqref{biharr20}, a straightforward computation yields
\begin{align*}
\Phi_{n}''(x)=&\frac{-2\mu^2\zeta(x)}{\ga\exp{\(\mu_n \ga\)}}\(\frac{\rho(x)}{\sigma(x)}\)^{\frac{1}{2}}
\(\cos\(\mu_n \ga\)- \cosh\(\mu_n \ga\)\)\(\cos\(\mu_n X\)+\cosh\(\mu_n X\)\)\[1\]\\&-\frac{2\mu^2\zeta(x)}{\ga\exp{\(\mu_n \ga\)}}\(\frac{\rho(x)}{\sigma(x)}\)^{\frac{1}{2}}\(\sin\(\mu_n \ga\)+\sinh\(\mu_n \ga\)\)\(
\sin\(\mu_n X\)+\sinh\(\mu_n X\)\)\[1\].
                                                  \end{align*}
As consequence, one has
\bea |\Phi_{n}''(\ell)|=\frac{4\mu_n^2\zeta(\ell)}{\ga\exp{\(\mu_n \ga\)}}\(\frac{\rho(\ell)}{\sigma(\ell)}\)^{\frac{1}{2}}\left|\sin\(\mu_n \ga\)\sinh\(\mu_n \ga\)\right|[1]. \eea
Therefore, from this and the asymptote \eqref{biharr4}, we get \eqref{biharr18}.  The proof is complete.
\end{proof}
\section{Observability}
In this section, we prove some observability results which are consequences of the asymptotic properties of the previous section. The reason to study these properties is that, by means of the Lions'{\rm HUM}
\cite{J.L2}, controllability properties can be reduced to suitable observability
inequalities for the adjoint system. As \eqref{bihar2} is a self-adjoint system, we are reduced to the same
system, without control. Therefore, consider system \eqref{bihar2} without control, i.e.,
\bean
\label{biharr22} \left\{
 \begin{array}{ll}
i\rho(x)\partial_t\tilde y=-\partial_x^2\(\sigma(x)\partial_x^2\tilde y\)+
\partial_x(q(x)\partial_x\tilde y)_x,&(t,x)\in(0,T)\times(0,\ell),\\
\tilde y(t, 0) =\partial_x\tilde  y(t,0) =\tilde  y(t, \ell) =\partial_x\tilde y(t, \ell) = 0,&t\in(0,T),\\
\tilde y(0, x) = \tilde  y^{0},& x\in(0,\ell).
\end{array}
 \right.
\eean
One has:
\begin{Prop}\label{ph1}
Let $T
>0$ and  $\tilde y^0\in H^2_0(0,\ell)$. Then
\begin{equation}\label{biharr23}
\int_{0}^{T}|\partial_x^2\tilde{y}(t,\ell)|^{2}dt \asymp
\|\tilde{y}^0\|_{H^2_0(0,\ell)}^{2},
\end{equation}
where $\tilde y$ is the solution of problem \eqref{biharr22}.
\end{Prop}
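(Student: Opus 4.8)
The plan is to diagonalize both sides of \eqref{biharr23} against the orthonormal basis $(\Phi_n)_{n\in\N^*}$ and to reduce the estimate to a nonharmonic Fourier series inequality. By Proposition \ref{pos}, the solution of \eqref{biharr22} with datum $\tilde y^0=\sum_n c_n\Phi_n\in H^2_0(0,\ell)=\mathcal H_{1/2}$ is $\tilde y(t,x)=\sum_n c_n e^{i\la_n t}\Phi_n(x)$, and by the spectral description \eqref{bihar8} with $\theta=1/2$,
$$\|\tilde y^0\|_{H^2_0(0,\ell)}^2=\sum_{n\in\N^*}\la_n|c_n|^2.$$
Differentiating the series twice in $x$ and evaluating at $x=\ell$ gives the boundary trace
$$\partial_x^2\tilde y(t,\ell)=\sum_{n\in\N^*} a_n\, e^{i\la_n t},\qquad a_n:=c_n\Phi_n''(\ell),$$
so that the left-hand side of \eqref{biharr23} equals $\int_0^T\big|\sum_n a_n e^{i\la_n t}\big|^2\,dt$. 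This manipulation is at first formal, but it is justified a posteriori once the coefficient bound below shows $\sum_n|a_n|^2<\infty$, so that the trace is a genuine $L^2(0,T)$ function defined through its series (hidden regularity).

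Next I would invoke the Beurling variant of Ingham's inequality. By Theorem \ref{SP2} the spectral gap satisfies $|\la_{n+1}-\la_n|\asymp n^3(\pi/\ga)^4\to\infty$, so the frequency set $\{\la_n\}_{n\in\N^*}$ has upper density zero; consequently the critical length is $0$ and $(e^{i\la_n t})_{n\in\N^*}$ is a Riesz sequence in $L^2(0,T)$ for every $T>0$. This yields constants $0<c_T\le C_T<\infty$ with
$$c_T\sum_{n\in\N^*}|a_n|^2\le\int_0^T\Big|\sum_{n\in\N^*}a_n\, e^{i\la_n t}\Big|^2\,dt\le C_T\sum_{n\in\N^*}|a_n|^2,$$
reducing matters to the comparison $\sum_n|a_n|^2=\sum_n|c_n|^2|\Phi_n''(\ell)|^2\asymp\sum_n\la_n|c_n|^2$.

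The crux is therefore a two-sided, $n$-uniform comparison $|\Phi_n''(\ell)|^2\asymp\la_n$. Proposition \ref{Lem22} supplies the asymptotic
$$\lim_{n\to\infty}\frac{|\Phi_n''(\ell)|}{\sqrt{\la_n}}=\frac{2\zeta(\ell)}{\ga}\Big(\frac{\rho(\ell)}{\sigma(\ell)}\Big)^{1/2}=:\kappa>0,$$
which gives the comparison for all sufficiently large $n$; the finitely many remaining indices are covered by the nonvanishing statement $\Phi_n''(\ell)\neq0$ for every $n$ provided by Theorem \ref{Lem2}, inequality \eqref{biharr2}. Combining the two, the sequence $\big(|\Phi_n''(\ell)|^2/\la_n\big)_{n\in\N^*}$ is bounded above and below by positive constants, whence $\sum_n|c_n|^2|\Phi_n''(\ell)|^2\asymp\sum_n\la_n|c_n|^2=\|\tilde y^0\|_{H^2_0(0,\ell)}^2$. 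Chaining this with the Beurling estimate yields \eqref{biharr23}.

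I expect the main technical obstacle to be precisely this uniform comparison rather than the Fourier-series reduction: the eigenfunction asymptotics of Proposition \ref{Lem22} only control the ratio $|\Phi_n''(\ell)|^2/\la_n$ as $n\to\infty$, and without the nonvanishing result of Theorem \ref{Lem2} one could lose the lower bound at a single low mode (which would destroy the left inequality in \eqref{biharr23}). It is therefore essential that the asymptotic equivalence at infinity and the strict nonvanishing at every index be used together; a secondary point requiring care is the rigorous justification that the boundary trace $\partial_x^2\tilde y(t,\ell)$ is given by the stated series, which follows cleanly only once the coefficient estimate is in hand.
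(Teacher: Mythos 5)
Your proposal is correct and follows essentially the same route as the paper's proof: expansion of the solution in the eigenbasis, application of Beurling's variant of Ingham's inequality (Lemma \ref{In}) using the gap \eqref{biharr5} together with the upper density $D^+(\la_n)=0$ so that every $T>0$ is admissible, and the two-sided comparison $|\Phi_n''(\ell)|^2\asymp\la_n$ obtained by combining the asymptotic \eqref{biharr18} of Proposition \ref{Lem22} with the nonvanishing statement \eqref{biharr2} of Theorem \ref{Lem2} for the finitely many low modes. Your explicit handling of those finitely many indices (and of the a posteriori justification of the boundary-trace series) makes precise points the paper treats briskly, but the argument is the same one.
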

In order to prove Proposition \ref{ph1}, we need the following
variant of Ingham's inequality due to Beurling (e.g., \cite{L}).
\begin{Lem}\cite{L}\label{In}
Let $({\la}_n)_{n\in\mathbb{Z}}$  be a strictly increasing sequence satisfying for some $\delta > 0$
the condition $$
|{\la}_{n+1}-{\la}_n|>\delta,~\forall~ n\in\mathbb{Z}.$$
Then, for any $T>2\pi D^+\(\la_n\)$, the family $\(e^{i\lambda_n t}\)_{n\in\mathbb{Z}}$ forms a Riesz basis in $L^2(0, T)$, that is
\begin{equation*}
\int_{0}^{T}\left|\sum\limits_{n\in\mathbb{Z}}c_ne^{i{\la}_nt}\right|^{2}dt \asymp\sum\limits_{n\in
\mathbb{Z}}|c_{n}|^{2},
\end{equation*}
where $ D^+\(\la_n\):= \displaystyle \lim_{r\to\infty}
\frac{n^+\(r,\lambda_n\)}{r}$ is the Beurling upper density of the sequence $(\lambda_{n})_{n\in\N^*},$ with $n^+\(r ,\lambda_n\)$ denotes the maximum number of terms of the sequence $(\lambda_{n})_{n\in\N^*}$ contained
in an interval of length $r$.
\end{Lem}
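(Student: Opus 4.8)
\begin{proof1} {\it of Lemma \ref{In}.}
The plan is to read the two displayed inequalities as the upper (Bessel) and lower (Riesz--Fischer) bounds of a Riesz sequence, and to prove them separately: the separation $|\la_{n+1}-\la_n|>\delta$ alone yields the upper bound for \emph{every} $T$, while the density threshold $T>2\pi D^+(\la_n)$ is needed only for the lower bound. After the harmless translation $t\mapsto t-T/2$, which multiplies each exponential by a unimodular constant and so changes neither side of $\asymp$, I work on the symmetric interval $\(-T/2,T/2\)$, where the Fourier transform $\hat f(z)=\int_{-T/2}^{T/2}f(t)e^{-izt}dt$ of an $f\in L^2\(-T/2,T/2\)$ is (up to a unimodular factor on $\R$) an element of the Paley--Wiener space $PW_{T/2}$ of entire functions of exponential type $\le T/2$ that are square--integrable on the line, with $\|\hat f\|_{L^2(\R)}\asymp\|f\|_{L^2(-T/2,T/2)}$ by Plancherel.

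For the upper bound I would argue by duality: $\(e^{i\la_n t}\)_{n\in\Z}$ is a Bessel sequence in $L^2\(-T/2,T/2\)$ if and only if $\sum_n|\langle f,e^{i\la_n t}\rangle|^2\le B\|f\|^2$, and since $\langle f,e^{i\la_n t}\rangle=\hat f(\la_n)$ this is the sampling estimate $\sum_n|\hat f(\la_n)|^2\le B\|\hat f\|_{L^2(\R)}^2$ for $\hat f\in PW_{T/2}$. This is exactly the Plancherel--P\'olya inequality, whose constant $B=B(\delta,T)$ depends only on the type $T/2$ and on the separation $\delta$; it uses the gap hypothesis and nothing about the density.

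The lower bound is the substantive part. By the standard moment--problem duality, the estimate $\int|\sum_n c_ne^{i\la_n t}|^2\,dt\ge A\sum_n|c_n|^2$ holds if and only if the interpolation problem $\hat f(\la_n)=a_n$ is solvable, with $\|f\|\le C\|a\|_{\ell^2}$, for every $\(a_n\)\in\ell^2\(\Z\)$; that is, if and only if $\(\la_n\)$ is an interpolating sequence for $PW_{T/2}$. Here I would invoke Beurling's density theorem: a uniformly separated real sequence with $D^+(\la_n)<T/(2\pi)$ --- precisely the hypothesis $T>2\pi D^+$ --- is interpolating for $PW_{T/2}$. To realize the interpolation explicitly I would construct a generating function $G$, entire of exponential type strictly below $T/2$, vanishing exactly on $\(\la_n\)$, with a uniform lower bound on $|G'(\la_n)|$ and a matching growth bound $|G(x+iy)|\le C\,e^{(T/2-\ep)|y|}$; the interpolant is then the Lagrange-type series $F(z)=\sum_n a_n\,\dfrac{G(z)}{G'(\la_n)\,(z-\la_n)}$, whose $PW_{T/2}$-membership and norm control follow from these estimates on $G$ together with the $\delta$-separation of $\(\la_n\)$.

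The main obstacle is exactly the construction of $G$ with exponential type \emph{strictly} less than the critical value $T/2$: this is where the strict density bound $D^+<T/(2\pi)$ is consumed. One builds $G$ as a canonical product over the zeros $\(\la_n\)$ and controls its type and modulus through the counting function $n^+(r,\la_n)$, via Jensen's formula and the Beurling--Malliavin multiplier theorem, the strict inequality supplying the margin $\ep$ needed to absorb the remainder terms; the separation simultaneously keeps $G'(\la_n)$ from degenerating and secures the Bessel side above. Combining the Plancherel--P\'olya upper bound with the interpolation lower bound gives $\int_{0}^{T}|\sum_n c_ne^{i\la_n t}|^2\,dt\asymp\sum_n|c_n|^2$, i.e. that $\(e^{i\la_n t}\)_{n\in\Z}$ is a Riesz basis of its closed span in $L^2(0,T)$, which is the content of the displayed equivalence.
\end{proof1}
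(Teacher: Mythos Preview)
The paper does not prove Lemma~\ref{In} at all: it is stated with the citation \cite{L} and used as a black box, so there is no ``paper's own proof'' to compare against. Your proposal is therefore not competing with anything in the text; it is an outline of the classical Paley--Wiener/Beurling argument that underlies the cited result.

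As an outline it is essentially correct: the split into a Bessel upper bound (Plancherel--P\'olya, needing only the gap $\delta$) and a Riesz--Fischer lower bound (Beurling interpolation, needing the strict density inequality $D^+<T/(2\pi)$) is the standard route, and your identification of the generating function $G$ with controlled type and non-degenerate $|G'(\la_n)|$ as the crux is accurate. Two remarks. First, what you actually prove is that $\(e^{i\la_n t}\)$ is a \emph{Riesz sequence} (equivalently, a Riesz basis of its closed linear span); the lemma's phrase ``Riesz basis in $L^2(0,T)$'' would additionally require completeness, which your argument does not address and which in fact need not hold under the stated hypotheses --- but the displayed two-sided estimate, which is all the paper ever uses, is exactly the Riesz-sequence property you establish. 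Second, invoking the Beurling--Malliavin multiplier theorem is heavier machinery than strictly necessary here; for a uniformly separated real sequence with $D^+<T/(2\pi)$, the canonical product already has type $\pi D^+<T/2$ by elementary counting-function estimates, and the margin $\ep$ comes directly from the strict inequality without appealing to Beurling--Malliavin.
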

\begin{proof1} {\it of Proposition \ref{ph1}.} It follows
from the spectral representation \eqref{bihar8} of the space $\mathcal{H}_{\theta},$
that\begin{align*}
      \mathcal{H}_{1/2}&=
\left\{u(x)=\sum\limits_{n\in\N^*}c_n\Phi_n(x)~:
 ~\|u\|_{\theta}^2=\sum\limits_{n\in\N^*}\la_n|c_n|^{2}<\infty\right\} \\
       &=\mathcal D\(\mathcal{A}^{1/2}\)=H_0^2\(0,\ell\),
    \end{align*}
where the eigenfunctions $\(\Phi_{n}\)_{n\in\N^*}$ are given in
Proposition \ref{Lem22}. By Proposition \ref{pos}, the solution $\tilde y$  of problem \eqref{biharr22} has the form $$
\tilde{y}(t,x)=\sum\limits_{n\in\N^*}c_ne^{i\la_nt}{\Phi}_n(x),$$
where $\tilde {y}^0=\sum\limits_{n\in\N^*} c_n\phi_n$. Consequently,  \be
\int_{0}^{T}|\partial_x^2\tilde{y}(t,\ell)|^{2}dt =
\int_{0}^{T}\Big{|}\sum\limits_{n\in \N^*}
c_{n}e^{i{\la}_n t} \Phi_n''\(\ell\)\Big{|}^{2}dt. \label{biharr24}\ee  Thus by the first statement of Theorem
\ref{Lem2} and the gap condition \eqref{biharr5}, Beurling's Lemma \ref{In}
states that for any $T>D^+\(\la_n\)$, the family $\(e^{i\lambda n
t}\)_{n\in\N^*}$ forms a Riesz basis in $L^2(0, T)$, where $D^+\(\la_n\)$ is the Beurling upper density of the eigenvalues
$(\lambda_{n})_{n\in\N^*}$. Furthermore,
for every $T>D^+\(\la_n\),$ one has \be \int_{0}^{T}\Big{|}\sum\limits_{n\in \N^*}
c_{n}e^{i{\la}_n t} \Phi_n''\(\ell\)\Big{|}^{2}dt\asymp \sum\limits_{n\in
\N^*}\left|c_n \Phi''_n(\ell)\right|^{2}.\label{biharr25}\ee
From  the asymptote \eqref{biharr4} and the characteristic equation \eqref{biharr15}, we find that the Beurling upper density of the eigenvalues
$(\lambda_{n})_{n\in\N^*}$,
$$ D^+\(\la_n\)=\lim_{n\to\infty}\frac{{\ga}^4}{{\pi^4}\(n-\frac{1}{2}\)^3}=0.$$
By the second statement of Theorem \ref{Lem2}, we have
$$\Phi'_n(\ell)\not= 0 \hbox{ for all } n\in \mathbb{N}^*,$$
and then by \eqref{biharr18}, we deduce that there exists $C_1,~ C_2 > 0$ such that $$
C_1{\la_n}\leq\left|\Phi''_n(\ell)\right|^2\leq C_2{\la_n},~\hbox{ as } n\to\infty.$$ Therefore from the above and \eqref{biharr25}, for any $T>0$
$$\int_{0}^{T}\Big{|}\sum\limits_{n\in \N^*}
c_{n}e^{i{\la}_n t} \Phi_n''\(\ell\)\Big{|}^{2}dt\asymp \sum\limits_{n\in
\N^*}\lambda_n\left|c_n\right|^{2}.$$
Thus from this and \eqref{biharr24}, we get \eqref{biharr23}.
 This completes the proof.
\end{proof1}
\section{Exact boundary controllability}
In this section, we prove the exact boundary controllability of the control problem  \eqref{bihar2}.
\subsection{Well-posedness} Since we are dealing with boundary control, we
need to introduce the weaker notion of "solution defined by transposition" in the spirit of \cite{V.K, J.L3}.

\medskip

Let $\tilde y$ be the solution to problem \eqref{biharr22} satisfying \eqref{bihar9}. Now let $f\in C^{\infty}(0,T)$ (or $f\in L^{2}(0,T)$ since $C^{\infty}(0,T)$ is dense in $L^{2}(0,T)$) and let $y\in C^{4}\([0,T];(0,\ell)\)$ be a function satisfying \eqref{bihar2}. Then we multiply \eqref{bihar7} by $y$ and integrate on $(0,T)\times(0,\ell)$ to obtain
\bea i\int_{0}^{\ell}\int_{0}^{T}\partial_t\tilde yy(t,x)dt\rho(x)dx+\int_{0}^{\ell}\int_{0}^{T}\(\partial_x^2\(\sigma(x)\partial_x^2\tilde y\)-\partial_x(q(x)\partial_x\tilde y)_x\)y(t,x)dtdx=0.\eea
Then integrate by parts and using the boundary conditions in \eqref{bihar2} and \eqref{bihar7}, we get
\begin{align*}
i\int_{0}^{\ell}\bigg[\tilde y y(t,x)\bigg]_{0}^{T}\rho(x)dx
= \sigma(\ell)\int_{0}^{T}\partial_x^2\tilde y\(t,\ell\)f(t)dt+i\int_{0}^{\ell}\int_{0}^{T}\partial_ty\tilde y(t,x)dt\rho(x)dx\\
~~-\int_{0}^{\ell}\int_{0}^{T}\(\partial_x^2\(\sigma(x)\partial_x^2 y\)-\partial_x(q(x)\partial_x y)_x\)\tilde y(t,x)dtdx
\end{align*}
and then
\be i\int_{0}^{\ell}\tilde y y(T,x)\rho(x)dx=\sigma(\ell)\int_{0}^{T}\partial_x^2\tilde y\(t,\ell\)f(t)dt+ i\int_{0}^{\ell}\tilde y^0 y^0\rho(x)dx.\label{biharr29}
\ee
Let us define the spaces
$$\mathcal{S}:=H^{2}_0(0,\ell)\hbox{ and } \mathcal{S}':=H^{-2}(0,\ell),$$
and the linear functional $\mathcal{L}_{T}$ on $S$ by
\be \displaystyle \mathcal{L}_{T}(\tilde y^0)=i\langle y^0,\tilde y^0
\rangle_{\mathcal{S}^{'},\mathcal{S}}+\sigma(\ell)\int_{0}^{T}\partial_x^2\tilde y\(t,\ell\)f(t)dt.\label{biharr30}\ee
Moreover, we have
\be \Vert \mathcal{L}_{T}\Vert\leq C\left(\Vert y^0\Vert_{H^{-2}(0,\ell)}+\Vert f\Vert_{L^{2}(0,T)}\right).\label{biharr32}\ee
Using \eqref{biharr29}, we may rewrite the identity \eqref{biharr30} in the following form \be \displaystyle \mathcal{L}_{T}(\tilde y^0)=i\langle y(T,x),\tilde y(T,x)
\rangle_{\mathcal{S}^{'},\mathcal{S}}.\label{biharr33}\ee
This motivates the following definition.
\begin{Rem1}
We say that $y$ is a weak solution to problem \eqref{bihar2} in the sense of transposition if $y\in C\([0, T ]; H^{-2}(0,\ell)\)$ satisfies \eqref{biharr33} for all $T>0$ and for every $\tilde y^0\in \mathcal S.$
\end{Rem1}Then we have the following:
\begin{Prop} \label{b-p} Let
$T>0$, and  $f\in L^{2}(0,T)$. Then for any $y^0\in H^{-2}(0,\ell)$, there exists a unique weak solution  of
system \eqref{bihar2} in the sense of transposition, satisfying
\be y\in C\([0, T ]; H^{-2}(0,\ell)\).\label{biharr43}\ee
Moreover, there exists a constant $C>0$ such that
\be \|y\|_{L^\infty\([0, T ]; H^{-2}(0,\ell)\)}\leq
C\(\|y^0\|_{H^{-2}(0,\ell)}+\|f\|_{L^{2}(0,T)}\).\label{biharr44}\ee\end{Prop}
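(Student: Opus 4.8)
The plan is to construct the weak solution directly from the transposition identity \eqref{biharr33}, treating the linear functional $\mathcal{L}_T$ as the data that defines $y(T,\cdot)$. The key observation is that the map $\tilde y^0 \mapsto \tilde y$ sending initial data to the solution of the adjoint system \eqref{biharr22} is, by Proposition \ref{pos}, an isometry-type correspondence, so everything can be phrased on the level of $\mathcal S = H^2_0(0,\ell)$ and its dual $\mathcal S' = H^{-2}(0,\ell)$. First I would fix $t\in[0,T]$ and consider the functional $\mathcal L_t$ defined exactly as in \eqref{biharr30} but with the time integral running over $(0,t)$ instead of $(0,T)$; the estimate \eqref{biharr32}, which I would establish below, shows $\mathcal L_t$ is a bounded linear functional on $\mathcal S$. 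By the Riesz representation theorem applied in the Hilbert space $\mathcal S = H^2_0(0,\ell)$, there is a unique element, which I denote $y(t,\cdot)\in \mathcal S' = H^{-2}(0,\ell)$, representing $\mathcal L_t$ through the duality pairing. Defining $y$ this way for each $t$ produces a function satisfying \eqref{biharr33}, and uniqueness is immediate because any two weak solutions give the same functional on the dense set of data, hence agree in $\mathcal S'$ for every $t$.

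The heart of the argument is the bound \eqref{biharr32}, from which both the regularity \eqref{biharr43} and the stability estimate \eqref{biharr44} will follow. The first term in \eqref{biharr30}, namely $i\langle y^0,\tilde y^0\rangle_{\mathcal S',\mathcal S}$, is controlled by $\|y^0\|_{H^{-2}(0,\ell)}\,\|\tilde y^0\|_{H^2_0(0,\ell)}$ directly from the definition of the dual norm. For the second term, $\sigma(\ell)\int_0^t \partial_x^2\tilde y(s,\ell)f(s)\,ds$, I would apply the Cauchy--Schwarz inequality in time to bound it by $\sigma(\ell)\,\|f\|_{L^2(0,T)}\,\big(\int_0^T |\partial_x^2\tilde y(s,\ell)|^2\,ds\big)^{1/2}$, and then invoke the \emph{direct} inequality contained in the observability estimate \eqref{biharr23} of Proposition \ref{ph1}, which gives $\int_0^T|\partial_x^2\tilde y(s,\ell)|^2\,ds \le C\,\|\tilde y^0\|_{H^2_0(0,\ell)}^2$. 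Combining the two contributions yields $|\mathcal L_t(\tilde y^0)| \le C(\|y^0\|_{H^{-2}}+\|f\|_{L^2(0,T)})\,\|\tilde y^0\|_{H^2_0}$, which is precisely \eqref{biharr32} uniformly in $t\in[0,T]$; taking the dual norm gives \eqref{biharr44} with the supremum over $t$.

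It remains to upgrade pointwise-in-time boundedness to continuity, i.e.\ to $y\in C([0,T];H^{-2}(0,\ell))$. Here I would show that $t\mapsto \mathcal L_t(\tilde y^0)$ is continuous for each fixed $\tilde y^0$: the first term is constant in $t$, and the second is $t\mapsto \sigma(\ell)\int_0^t \partial_x^2\tilde y(s,\ell)f(s)\,ds$, which is continuous because the integrand belongs to $L^1(0,T)$ (being a product of an $L^2$ trace, finite by \eqref{biharr23}, with $f\in L^2$). This gives weak continuity of $t\mapsto y(t,\cdot)$ in $\mathcal S'$; combined with the uniform bound \eqref{biharr44} and the fact that the estimates above are uniform in $t$, a standard density argument over the dense family of smooth data promotes weak continuity to strong continuity in $H^{-2}(0,\ell)$. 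The main obstacle I anticipate is exactly this last step: the transposition method delivers $\mathcal L_t$ and hence $y(t,\cdot)$ only weakly, and one must be careful that the direct trace estimate from \eqref{biharr23} is genuinely available at the regularity $\tilde y^0\in H^2_0(0,\ell)$ used here, so that the $L^1$-integrability of the boundary integrand — and therefore the continuity of the time-integral — is fully justified rather than merely formal.
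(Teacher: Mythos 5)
Your proposal is correct and follows essentially the same route as the paper's proof: you define $y(t,\cdot)$ by duality from the boundedness of the transposition functional, obtain \eqref{biharr32} from the dual-norm pairing together with Cauchy--Schwarz and the direct inequality contained in \eqref{biharr23} of Proposition \ref{ph1} (using Proposition \ref{pos} to pass between $\tilde y^0$ and $\tilde y(t,\cdot)$), deduce \eqref{biharr44}, and prove the time-continuity \eqref{biharr43} by a density argument. The only difference is one of detail: the paper states these steps in compressed form, while you make explicit the pointwise-in-$t$ functionals $\mathcal L_t$, the $L^1$-integrability giving weak continuity, and the upgrade to strong continuity via smooth data and the uniform bound.
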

\begin{proof} It follows from Proposition \ref{pos}, that for any $T>0$ the linear map
$$\tilde y(T,.)\longmapsto \tilde y^0$$ is an isomorphism from
$H^{2}_0(0,\ell)$ into itself. Hence, by Proposition \ref{ph1} we deduce  that the linear map $$\tilde y(T,.)\longmapsto \mathcal{L}_{T}(\tilde y^0)$$ is continuous on
$H^{2}_0(0,\ell)$. Therefore, by duality, Equation \eqref{biharr33} defines $y(T,x)$, as
a unique element in $H^{-2}(0,\ell)$. Moreover from \eqref{biharr32} it follows that \eqref{biharr44} holds. The continuity with respect to time in \eqref{biharr43} is proved by density argument. The proof is complete.
\end{proof}
\subsection{Exact controllability}
We are now ready to state our main controllability result. Thanks to the reversibility in time of \eqref{bihar2}, this system is exactly controllable
if and only if the system is null controllable. One has:
\begin{Theo}
\label{hj} Assume that the coefficients $\rho$, $\sigma$ and $q$
satisfy \eqref{bihar3} and \eqref{bihar4}. Given $T>0$ and
 $y^0\in H^{-2}\(0,\ell\) $, there
exists a control $f\in L^{2}(0,T)$ such that the solution $y$ of the
control problem \eqref{bihar2}
 satisfies
\begin{equation*}
y(T,x)=0,~~x\in\[0,\ell\].
\end{equation*}
\end{Theo}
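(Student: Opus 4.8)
The plan is to deduce the final exact controllability theorem from the observability inequality of Proposition~\ref{ph1} via the Hilbert Uniqueness Method, exploiting the fact noted just before the statement: by time-reversibility of \eqref{bihar2}, exact controllability is equivalent to null controllability, so it suffices to steer an arbitrary $y^0\in H^{-2}(0,\ell)$ to rest at time $T$. The duality between control and observation is already encoded in the transposition identity \eqref{biharr33}, namely $\mathcal{L}_T(\tilde y^0)=i\langle y(T,\cdot),\tilde y(T,\cdot)\rangle_{\mathcal{S}',\mathcal{S}}$, which says that $y(T,\cdot)=0$ if and only if the functional $\mathcal{L}_T$ vanishes whenever we choose the control $f$ appropriately.

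First I would set up the adjoint (homogeneous) problem \eqref{biharr22} run backward from a terminal datum, and introduce the quadratic functional
\[
J(\tilde y^0)=\frac{\sigma(\ell)}{2}\int_0^T|\partial_x^2\tilde y(t,\ell)|^2\,dt+i\langle y^0,\tilde y^0\rangle_{\mathcal{S}',\mathcal{S}}
\]
on the Hilbert space $\mathcal{S}=H^2_0(0,\ell)$, where $\tilde y$ is the solution of \eqref{biharr22} with datum $\tilde y^0$. The functional $J$ is continuous and strictly convex; its coercivity is exactly the content of the observability estimate \eqref{biharr23}, since the lower bound $\int_0^T|\partial_x^2\tilde y(t,\ell)|^2\,dt\gtrsim\|\tilde y^0\|_{H^2_0(0,\ell)}^2$ together with the continuity of the duality pairing forces $J(\tilde y^0)\to+\infty$ as $\|\tilde y^0\|_{H^2_0}\to\infty$. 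By the direct method in the calculus of variations, $J$ attains a unique minimizer $\hat y^0\in\mathcal{S}$.

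Next I would write the Euler--Lagrange equation at the minimizer. Differentiating $J$ in the direction of an arbitrary $\tilde y^0\in\mathcal{S}$ gives
\[
\sigma(\ell)\int_0^T\partial_x^2\hat y(t,\ell)\,\overline{\partial_x^2\tilde y(t,\ell)}\,dt+i\langle y^0,\tilde y^0\rangle_{\mathcal{S}',\mathcal{S}}=0
\]
for all test data, where $\hat y$ solves \eqref{biharr22} with datum $\hat y^0$. Comparing this identity with the definition \eqref{biharr30} of $\mathcal{L}_T$ shows that the choice of control $f(t):=\partial_x^2\hat y(t,\ell)$, which lies in $L^2(0,T)$ by the observability estimate, makes $\mathcal{L}_T(\tilde y^0)=0$ for every $\tilde y^0\in\mathcal{S}$. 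In view of \eqref{biharr33}, this means $\langle y(T,\cdot),\tilde y(T,\cdot)\rangle_{\mathcal{S}',\mathcal{S}}=0$ for all $\tilde y^0\in\mathcal{S}$; since $\tilde y^0\mapsto\tilde y(T,\cdot)$ is an isomorphism of $H^2_0(0,\ell)$ (as used in Proposition~\ref{b-p}), the terminal states $\tilde y(T,\cdot)$ exhaust a dense subset of $\mathcal{S}$, whence $y(T,\cdot)=0$ in $H^{-2}(0,\ell)$. The weak solution $y$ associated with this $f$ is precisely the one provided by Proposition~\ref{b-p}, and it satisfies $y(T,x)=0$, completing the argument.

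The main obstacle is the coercivity of $J$, and this is exactly where the earlier spectral work is indispensable: it is the \emph{lower} bound in the two-sided estimate \eqref{biharr23} that is delicate, resting on the simplicity of the eigenvalues and the nonvanishing $\Phi_n''(\ell)\neq0$ from Theorem~\ref{Lem2}, the spectral gap asymptotics \eqref{biharr5}, and the boundary normalization \eqref{biharr18} that yields $|\Phi_n''(\ell)|^2\asymp\lambda_n$. Once coercivity is in hand, the remaining HUM machinery is routine; care is only needed to justify that the minimizer's boundary trace $\partial_x^2\hat y(t,\ell)$ is a bona fide $L^2(0,T)$ function (guaranteed by the upper bound in \eqref{biharr23}) and that the variational identity matches the transposition identity term by term.
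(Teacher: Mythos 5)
The proposal is correct and takes essentially the same route as the paper: both are HUM reductions to the observability inequality of Proposition~\ref{ph1}, producing the control $f(t)=\partial_x^2\tilde y(t,\ell)$ from the adjoint system via the transposition identity \eqref{biharr33}, and your minimization of the quadratic functional $J$ with its Euler--Lagrange equation is just the variational implementation of what the paper phrases as coercivity of the HUM map $\Lambda$ together with Lax--Milgram surjectivity. One cosmetic point: as written $J$ is complex-valued, so you should minimize $\frac{\sigma(\ell)}{2}\int_0^T|\partial_x^2\tilde y(t,\ell)|^2\,dt+\mathrm{Re}\,\(i\langle y^0,\tilde y^0\rangle_{\mathcal{S}',\mathcal{S}}\)$ and test the Euler--Lagrange equation in the directions $\tilde y^0$ and $i\tilde y^0$ to recover the full complex identity --- a fix at the same level of bookkeeping as the conjugation conventions the paper itself glosses over.
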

\begin{proof}
By the Lions'{\rm HUM} \cite{J.L2}, solving the exact controllability problem is equivalent to proving an observability
inequality for the backward problem. The backward problem is
\bean
\label{biharr31} \left\{
 \begin{array}{ll}
i\rho(x)\partial_ty=-\partial_x^2\(\sigma(x)\partial_x^2y\)+
\partial_x(q(x)\partial_xy)_x,&(t,x)\in(0,T)\times(0,\ell),\\
y(t, 0) =\partial_xy(t,0) =y(t, \ell) = 0,~\partial_xy(t, \ell) = \partial_x^2\tilde{y}(t,\ell),&t\in(0,T),\\
y(T, x)=0,& x\in(0,\ell),
\end{array}
 \right.
\eean
where $\tilde y$ is the solution of the uncontrolled system  \eqref{biharr22}. By Proposition \ref{b-p}, problem \eqref{biharr31} has a unique weak solution $y$, satisfying
$y^0:=y(0,x) \in H^{-2}(0,\ell).$ Hence the linear map
$$\Lambda~:~H^{2}_0(0,\ell)\longrightarrow H^{-2}(0,\ell),~~ \tilde y^0\longmapsto -iy^0$$ is continuous from
$H^{2}_0(0,\ell)$ into $H^{-2}(0,\ell)$. Furthermore, if $\Lambda$ is shown to be surjective then there exists a control of the form $f(t) = \partial_x^2\tilde{y}(t,\ell)$ which drives the system \eqref{bihar2} to rest in time
$T.$ Since $y(T, x)=0$, then for the choice of $f(t) = \partial_x^2\tilde{y}(t,\ell)$ by \eqref{biharr30}, one has
$$-i\langle y^0,\tilde y^0
\rangle_{\mathcal{S}^{'},\mathcal{S}}=\sigma(\ell)\int_{0}^{T}\partial_x^2\tilde y\(t,\ell\)dt.$$
Equivalently $$\langle \Lambda  (\tilde y^0),\tilde y^0
\rangle_{\mathcal{S}^{'},\mathcal{S}}=\sigma(\ell)\int_{0}^{T}\partial_x^2\tilde y\(t,\ell\)dt.$$
By Proposition \ref{ph1}, for every $T
>0$ and  $\tilde y^0\in H^2_0(0,\ell)$, we have
$$
\int_{0}^{T}|\partial_x^2\tilde{y}(t,\ell)|^{2}dt \asymp
\|\tilde{y}^0\|_{H^2_0(0,\ell)}^{2}.
$$
Consequently from the above, for every $T
>0,$  $$\langle \Lambda  (\tilde y^0),\tilde y^0
\rangle_{\mathcal{S}^{'},\mathcal{S}}\asymp
\|\tilde{y}^0\|_{H^2_0(0,\ell)}^{2}.$$
Therefore by the Lax–Milgram Theorem, $\Lambda$ is surjective.  This implies that there exists a control of the form $f(t) = \partial_x^2\tilde{y}(t,\ell)$ which drives the system \eqref{bihar2} to rest in time
$T>0,$ and this completes the proof of Theorem \ref{hj}.
\end{proof}

\end{document}